\newtheorem{theorem}{Theorem}
\newtheorem*{theorem*}{Theorem}
\numberwithin{theorem}{section}
\newtheorem{definition}[theorem]{Definition}
\newtheorem{prop}{Proposition}
\newtheorem{lemma}[theorem]{Lemma}
\newtheorem{remark}[theorem]{Remark}
\newtheorem{conjecture}[theorem]{Conjecture}
\newtheorem{corollary}[theorem]{Corollary}
\numberwithin{prop}{section}
\title[Curved Kakeya sets and Nikodym problems]{Curved Kakeya sets and Nikodym problems on manifolds}
\author{Chuanwei Gao}
\address{School of Mathematical Sciences, Capital Normal University, Beijing 100048, China}
\email{cwgao@cnu.edu.cn}
\author{Diankun Liu}
\address{School of Mathematical Sciences, Zhejiang University, Hangzhou 310027, PR China}
\email{12335023@zju.edu.cn}
\author{Yakun Xi}
\address{School of Mathematical Sciences, Zhejiang University, Hangzhou 310027, PR China}
\email{yakunxi@zju.edu.cn}
\begin{document}
	\maketitle
	\begin{abstract}
		In this paper, we study curved Kakeya sets associated with phase functions satisfying Bourgain's condition. In particular, we show that the analysis of curved Kakeya sets arising from translation-invariant phase functions under Bourgain's condition, as well as Nikodym sets on manifolds with constant sectional curvature, can be reduced to the study of standard Kakeya sets in Euclidean space. Combined with the recent breakthrough of Wang and Zahl \cite{WangZahl}, our work establishes the Nikodym conjecture for three-dimensional manifolds with constant sectional curvature.
		
		Moreover, we consider $(d,k)$-Nikodym sets and $(s,t)$-Furstenberg sets on Riemannian manifolds. For manifolds with constant sectional curvature, we prove that these problems can similarly be reduced to their Euclidean counterparts. As a result, the Furstenberg conjecture on two-dimensional surfaces with constant Gaussian curvature follows from the work of Ren and Wang \cite{Ren-Wang}.
		
	\end{abstract}
	\section{Introduction}\label{sec intro}
	\subsection{Kakeya sets, Nikodym sets and curved Kakeya sets}
	Let us begin by recalling the notion of standard Kakeya sets in $\mathbb{R}^d$.
	\begin{definition}
		A \textbf{Kakeya set} in $\mathbb{R}^d$ is a compact set $E \subset \mathbb{R}^d$ such that for every direction $e \in S^{d-1}$, there exists a point $x \in \mathbb{R}^d$ for which
		\[
		x + t e \in E 
		\quad
		\text{for all } t \in \left[-\tfrac12, \tfrac12\right].
		\] 
	\end{definition}
	In 1920, Besicovitch constructed an example of a Kakeya set in the plane with Lebesgue measure zero. Since then, mathematicians have extensively studied the dimensions of Kakeya sets, and the following conjecture becomes a central problem in harmonic analysis and geometric measure theory.
	\begin{conjecture}
		The Hausdorff dimension of the Kakeya sets is $d$ in $\mathbb{R}^{d}$. 
	\end{conjecture}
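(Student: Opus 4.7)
This is the Kakeya conjecture, which is open in general, so any honest plan must be partial. The first step is to discretize: by standard arguments of Bourgain and Wolff, the Hausdorff-dimension statement is equivalent to an $L^d(S^{d-1})\to L^d(\mathbb{R}^d)$ bound with $\delta^{-\epsilon}$ loss for the $\delta$-Kakeya maximal function, or equivalently to the lower bound $\bigl|\bigcup_j T_j\bigr|\gtrsim \delta^\epsilon$ for every family of $\sim \delta^{-(d-1)}$ essentially direction-separated $\delta$-tubes of unit length. I would take this discretized form as the working statement throughout.

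Given the discretization, the plan is to run an induction on scales combined with a broad/narrow dichotomy in the style of Bourgain--Guth and Guth--Katz--Zahl. In the broad case, where the tubes under consideration are quantitatively transverse, the multilinear Kakeya inequality of Bennett--Carbery--Tao (or a $k$-linear refinement) supplies essentially sharp volume bounds and closes the estimate directly. In the narrow case the tubes cluster into thin plates; there I would rescale the clusters to a unit cube and attempt to apply the inductive hypothesis at the smaller scale, hoping to exhibit a genuine gain in the stickiness parameter at each step.

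For $d=2$ the result is classical (C\'ordoba, Davies); for $d=3$ I would directly invoke the recent Wang--Zahl structure theorem, which classifies the narrow regime via a sticky/plany/non-sticky trichotomy and dispatches each piece with a polynomial ham-sandwich partition combined with an analysis of algebraic grains. This is precisely the input that the authors of the present paper exploit to deduce the Nikodym conjecture on three-dimensional manifolds of constant sectional curvature.

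The single hardest step, and the reason the full conjecture remains open for $d\geq 4$, is the closure of the induction in the plany regime: one needs a structure theorem classifying the $\delta$-tube families that concentrate near low-degree algebraic hypersurfaces, together with a quantitative improvement in stickiness at every rescaling. Absent such a theorem, my realistic proposal is to follow the Wang--Zahl template dimension by dimension, extracting partial bounds while conceding that a genuinely new ingredient, likely a refined polynomial-method identity adapted to direction sets or a dimension-monotone multilinear inequality, is required to reach the full conjecture.
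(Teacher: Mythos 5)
The statement you were asked to address is labeled a \emph{conjecture} in the paper, and the paper offers no proof of it: it is the Kakeya conjecture itself, known only for $d=2$ (Davies, C\'ordoba) and, very recently, for $d=3$ (Wang--Zahl). The paper merely \emph{cites} these results as black boxes and never attempts the general case, so there is no ``paper's own proof'' to compare against. Your proposal is, accordingly, not a proof but an accurate survey of the standard attack: discretization to direction-separated $\delta$-tubes, broad/narrow dichotomy with multilinear Kakeya in the broad case, induction on scales in the narrow case, and the Wang--Zahl sticky/plany trichotomy in dimension three. You are candid that the induction does not close for $d\geq 4$, and that candor is correct: the plany/grainy regime is exactly where the argument is open, and no amount of reorganization of the known ingredients is currently believed to fix it.

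Two smaller inaccuracies worth flagging. First, you assert that the Hausdorff-dimension statement is \emph{equivalent} to the $L^d\to L^d$ Kakeya maximal bound with $\delta^{-\epsilon}$ loss; in fact the maximal conjecture is formally stronger --- it implies the dimension statement, but the reverse implication is not known (the Minkowski version admits a cleaner near-equivalence). Second, the lower bound on $\bigl|\bigcup_j T_j\bigr|$ that encodes full dimension should be $\gtrsim_\epsilon \delta^{\epsilon}$ for every $\epsilon>0$, which is what you wrote, but one should be careful that this discretized statement at a single scale corresponds to the Minkowski bound; upgrading to Hausdorff dimension requires handling tubes of many scales simultaneously or passing through the maximal function. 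Neither issue affects your main (correct) conclusion that the statement remains open for $d\geq 4$ and that a genuinely new idea is required.
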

	
	Davies \cite{D} (see also \cite{Cor77}) proved this conjecture in $\mathbb{R}^{2}$ in the 1970s. Bourgain \cite{Bourgain91.2} established that the Hausdorff dimension of these sets is at least $\frac{7}{3}$ in $\mathbb{R}^{3}$, employing a \emph{bush argument}. Schlag \cite{Schlag} subsequently re-proved this result by a different method in 1998. In 1995, Wolff \cite{Wolff95} improved the lower bound for the Hausdorff dimension of Kakeya sets to $\frac{d+2}{2}$ in $\mathbb{R}^{d}$, introducing an innovative technique now known as the \emph{hairbrush argument}. In 2002, Katz, \L aba, and Tao \cite{KLT} obtained a breakthrough by improving the Minkowski dimension of Kakeya sets to $\tfrac{5}{2} + \epsilon_{0}$ in dimension $3$, where $\epsilon_{0} > 0$ is an absolute constant. Subsequently, Katz and Zahl \cite{Katz19} showed the Hausdorff dimension in dimension $3$ also exceeds $\tfrac{5}{2} + \epsilon_0$, for a different absolute constant $\epsilon_{0} \textgreater 0$.  Later, Katz and Zahl \cite{Katz21} pushed the Hausdorff dimension in $\mathbb{R}^4$ up to $3.059$ using a \emph{planebrush argument}. Most recently, in a series of articles \cite{sticky,assouad,WangZahl}, Wang and Zahl made a landmark discovery by proving the Kakeya conjecture in $\mathbb{R}^{3}$ in full, confirming that every Kakeya set in three dimensions has Hausdorff dimension $3$.
	
	For a high-dimensional Kakeya set, in 1999, Bourgain \cite{Bourgain99} showed that the Hausdorff dimension is at least 
	$\frac{13}{25}d + \frac{12}{25}$, 
	and the Minkowski dimension is at least 
	$\frac{6d}{11} + \frac{5}{11}$, 
	using the arithmetic method. Katz and Tao \cite{Katz-Tao} proved that Kakeya sets in $\mathbb{R}^{d}$ have Hausdorff dimension greater than or equal to $(2-\sqrt{2})(d-4)+3$. Hickman, Rogers, and Zhang \cite{HRZ} and Zahl \cite{Zahl} proved a lower bound on the Hausdorff dimension related to the space dimension. As a corollary of their result, for every $\epsilon > 0$, there exists an infinite sequence of dimensions $d$ such that every Kakeya set $K \subset \mathbb{R}^{d}$ satisfies
	\[
	\dim_{\mathcal{H}}K \geq (2 - \sqrt{2})d + \frac{3}{2} - \frac{1}{\sqrt{2}} - \epsilon,
	\]
	where $\dim_{\mathcal{H}}K$ denotes the Hausdorff dimension of the set $K$.
	
	In Euclidean space, Nikodym sets naturally arise as the dual counterparts to Kakeya sets. However, unlike Kakeya sets, the concept of Nikodym sets lends itself to a natural generalization in the Riemannian setting. Minicozzi and Sogge \cite{MS,Sogge99} introduced the Nikodym problem on manifolds and provided a definition of Nikodym sets in this context.

	\begin{definition}
		Let $(M^d,g)$ be a Riemannian manifold of dimension $d$. By possibly rescaling the metric, we may assume that the injectivity radius of $M^d$ is at least 10. Consider a Borel set $\Omega \subset M^d$, $0 < \lambda < 1$. Define
		\begin{multline}
			\Omega_{\lambda}^{\star}
			\,=\,
			\bigl\{
			x \in M^d
			:\,
			\exists\, \gamma_{x} \,
			\text{with }\,
			\lvert \gamma_{x} \cap \Omega \rvert
			\,\geq\,\lambda
			\lvert \gamma_{x}\rvert,\\  \text{ where $\gamma_{x}$ is a unit geodesic segment passing through } x\text{} 
			\bigr\}.
		\end{multline}
		We say that $\Omega$ is a \textbf{Nikodym set} if for all $\lambda$ sufficiently close to $1$, the set $\Omega_{\lambda}^{\star}$ has positive measure.
	\end{definition}
	
	In the Euclidean case, the Nikodym problem is mostly equivalent to the Kakeya problem; see, for instance, \cite{Tao99, Mattila}. On the other hand, the Nikodym sets behave differently for general manifolds. Minicozzi and Sogge \cite{MS} demonstrated that, for general Riemannian manifolds $M^{d}$, the fundamental lower bound of $\lceil\frac{d+1}{2}\rceil$ for Nikodym sets is generally sharp. Consequently, one needs to consider a smaller class of Riemannian manifolds if one wants a Nikodym set to have full dimension.
	Sogge \cite{Sogge99} proved that the Hausdorff dimension of Nikodym sets on manifolds with constant sectional curvature is at least $\frac{5}{2}$, via a variant of Wolff's hairbrush method. Later, the third author \cite{Xi17} extended this result to higher dimensions, establishing a lower bound of $\frac{d+2}{2}$ on manifolds with constant sectional curvature. As a result, it is natural to conjecture the following.
	
	\begin{conjecture}
		The Hausdorff dimension of a Nikodym set is $d$ on a Riemannian manifold $M^{d}$ with constant sectional curvature.
	\end{conjecture}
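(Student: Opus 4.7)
The plan is to reduce the Nikodym problem on a constant sectional curvature manifold to the standard Euclidean Kakeya problem, and then to invoke the latter. First, the injectivity radius hypothesis allows us to localize: after covering $M^d$ by finitely many geodesic balls of bounded radius, a positive fraction of $\Omega^\star_\lambda$, together with a positive fraction of the unit geodesic segments witnessing the Nikodym property, lies in a single chart. So we may work in a fixed coordinate patch of one of the simply connected model spaces $\mathbb{R}^d$, $\mathbb{H}^d$, or $S^d$ (the passage to the universal cover is harmless once we have localized).

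Second, in suitable coordinates the family of unit geodesic segments can be written as graphs $\{(x', \phi(x'; v, b)) : x' \in \mathbb{R}^{d-1}\}$, parametrized by a direction $v \in S^{d-1}$ and a translation parameter $b \in \mathbb{R}^{d-1}$. Constant sectional curvature is exactly what makes the isometry group of $M^d$ large enough so that, in coordinates adapted to a codimension-one translation subgroup (horospherical coordinates on $\mathbb{H}^d$, stereographic coordinates on $S^d$, Cartesian on $\mathbb{R}^d$), the phase becomes \emph{translation invariant}, i.e.\ $\phi(x'; v, b) = \phi(x' - b; v, 0)$. A direct computation on the model spaces further confirms Bourgain's nondegeneracy condition. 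The earlier results of the paper then reduce the associated curved Kakeya set to a standard Kakeya set in $\mathbb{R}^d$ with the same Hausdorff dimension, so $\dim_{\mathcal H}\Omega$ is bounded below by the Hausdorff dimension of some Kakeya set in $\mathbb{R}^d$.

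Third, the Euclidean Kakeya conjecture then yields $\dim_{\mathcal H}\Omega = d$. This is unconditional for $d=2$ (Davies) and $d=3$ (Wang--Zahl \cite{WangZahl}); in higher dimensions it remains conditional on the Euclidean Kakeya conjecture.

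The principal obstacle is the coordinate choice in the second step. In normal exponential coordinates based at a point, geodesics through that point are straight, but off-origin geodesics curve in a base-point-dependent way, which destroys translation invariance. Selecting coordinates adapted to a transitive (or at least codimension-one translational) subgroup of the isometry group, and verifying both translation invariance and Bourgain's condition in those coordinates, is the technical core of the reduction. Once this is established, the dimension-preserving straightening argument from the curved Kakeya portion of the paper handles the rest; the higher-dimensional conjecture then rests only on progress toward the Euclidean Kakeya conjecture.
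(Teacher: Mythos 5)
You are addressing a statement that the paper itself leaves as a conjecture: what the paper actually proves is the conditional reduction (Theorem \ref{theomain2}) together with the unconditional case $d=3$ via Wang--Zahl, and you correctly frame your conclusion as conditional in higher dimensions. The issue is with the mechanism you propose for the reduction, which has a genuine gap. The paper straightens geodesics by an explicit projective map --- central (gnomonic) projection $p\mapsto(-x_1/x_{d+1},\dots,-x_d/x_{d+1})$ for $\mathbb{S}^d$ and the Beltrami--Klein model for $\mathbb{H}^d$ (Theorem \ref{theogeodesic}) --- which sends every geodesic in a chart to a straight line segment, and then converts the resulting Euclidean Nikodym set into a Kakeya set by the projective duality $F(\tilde x,x_d)=x_d^{-1}(\tilde x,1)$. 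You instead propose to find coordinates in which the distance-function phase becomes translation-invariant and satisfies Bourgain's condition, and then to invoke the paper's straightening theorem for translation-invariant phases (Theorem \ref{theomain}). The existence of such coordinates is exactly what you label ``the technical core'' and do not carry out, and the specific coordinates you name are the wrong ones: stereographic projection does not send great circles to lines (gnomonic projection does), and in horospherical coordinates on $\mathbb{H}^d$ geodesics are semicircles orthogonal to the boundary. No verification of translation invariance is given in either case, and the paper explicitly cautions that the distance function, while satisfying Bourgain's condition, is \emph{not} translation-invariant --- which is precisely why the Nikodym case is handled by the explicit projective construction rather than by the Section \ref{sec trans} machinery.

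A second, smaller omission: even after all geodesics are straightened, what you obtain is a Euclidean \emph{Nikodym} set, not a Kakeya set. The straightened lines pass through a positive-measure set of base points but need not cover all directions, so one still needs the point--direction projective duality (Mattila, Theorem 11.11) to pass from a lower bound for Kakeya sets to one for Nikodym sets; your proposal jumps directly from the straightened set to ``a standard Kakeya set in $\mathbb{R}^d$'' without this step. Both gaps are fillable --- the first by replacing your coordinate choices with the gnomonic/Klein projections and abandoning the translation-invariance route, the second by citing the standard duality --- but as written the argument does not close.
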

	
	Dai, Gong, Guo, and Zhang \cite{DGGZ24} showed that Nikodym sets on manifolds $M^{d}$ with constant sectional curvature satisfy Hausdorff dimension lower bounds matching the lower bounds for the Hausdorff dimension of Kakeya sets in $\mathbb{R}^d$ established in \cite{HRZ}.

	In this paper, we establish direct connections between Nikodym sets on manifolds with constant sectional curvature and standard Kakeya sets in $\mathbb{R}^d$, thereby improving the known lower bounds on the Hausdorff dimension of Nikodym sets in certain dimensions.
	
	\begin{theorem}\label{theomain2}
		Suppose that in $\mathbb{R}^d$, any Kakeya set must have Hausdorff (resp. Minkowski) dimension at least $\alpha$. Then any Nikodym set on a manifold $M^d$ with constant sectional curvature must also have Hausdorff (resp. Minkowski) dimension at least $\alpha$. In particular, the Kakeya conjecture in $\mathbb{R}^d$ implies the Nikodym conjecture on manifolds $M^d$ with constant sectional curvature for any dimension $d \ge 2$.
	\end{theorem}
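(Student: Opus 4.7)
The plan is to exploit the fact that any Riemannian manifold $M^d$ of constant sectional curvature $\kappa$ is locally isometric to one of the three model spaces: $\mathbb{R}^d$ ($\kappa = 0$), the round sphere $S^d_\kappa$ ($\kappa > 0$), or hyperbolic space $H^d_\kappa$ ($\kappa < 0$). For each model there is a classical coordinate chart $\Phi$ whose image lies in $\mathbb{R}^d$ and which sends geodesic arcs to straight line segments: the identity in the flat case, the gnomonic projection in the spherical case, and the Beltrami--Klein model in the hyperbolic case. I would use such a $\Phi$ to push a Nikodym set on $M^d$ forward to a standard Nikodym set in Euclidean space, and then appeal to the classical equivalence between Nikodym sets and Kakeya sets in $\mathbb{R}^d$ recorded in \cite{Tao99, Mattila} to transfer the dimension lower bound $\alpha$.

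\textbf{Transport step.} Since $\Omega_{\lambda}^{\star}$ has positive measure, it has a point of density $p$, so after restricting to a geodesic ball $B(p, r)$ of radius $r$ smaller than the injectivity radius, one may work in a single coordinate chart. Composing geodesic normal coordinates at $p$ with the geodesic-straightening map above produces a diffeomorphism $\Phi \colon B(p, r) \to U \subset \mathbb{R}^d$ whose $C^1$ bounds depend only on $\kappa$ and $r$. In particular $\Phi$ is bi-Lipschitz, and hence preserves both Hausdorff and Minkowski dimension. By construction, each unit geodesic segment $\gamma_x$ passing through $x \in B(p, r)$ is sent to a straight segment $\ell_{\Phi(x)}$ whose length lies in a fixed bounded interval depending only on $\kappa$ and $r$, and whose arclength parametrization has Jacobian uniformly bounded above and below. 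Consequently, the condition $|\gamma_x \cap \Omega| \geq \lambda |\gamma_x|$ translates into
\[
|\ell_{\Phi(x)} \cap \Phi(\Omega)| \,\geq\, \lambda'\, |\ell_{\Phi(x)}|,
\]
where $\lambda' \to 1$ as $\lambda \to 1$. A standard pigeonholing over the resulting lengths and directions then rescales $\Phi(\Omega \cap B(p, r))$ into a bona fide Euclidean Nikodym set in the sense of the original definition, having the same Hausdorff (and Minkowski) dimension as the corresponding piece of $\Omega$.

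\textbf{Conclusion and main obstacle.} To finish, I would apply the classical Euclidean equivalence between Kakeya and Nikodym dimension bounds \cite{Tao99, Mattila}, which proceeds via a projective transformation that exchanges a hyperplane at infinity with a hyperplane through a fixed point and preserves both Hausdorff and Minkowski dimension. This immediately transfers the Kakeya lower bound $\alpha$ to the Euclidean Nikodym set produced in the previous step, and composing with the bi-Lipschitz transport gives $\dim_{\mathcal H} \Omega \geq \alpha$ (with the analogous inequality for Minkowski dimension). The main technical point is the quantitative preservation of the Nikodym constant under $\Phi$: one has to verify that $\lambda'$ can be made arbitrarily close to $1$, uniformly over all unit geodesic segments meeting $B(p, r)$. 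This in turn reduces to explicit bounds on the metric distortion of the gnomonic and Beltrami--Klein charts restricted to a region of bounded diameter, which is an elementary computation in the model spaces and should not pose a serious obstacle once the geometric setup is in place.
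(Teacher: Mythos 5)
Your proposal is correct and follows essentially the same route as the paper: straighten geodesics via the gnomonic projection (sphere) and the Beltrami--Klein model (hyperbolic space), use bi-Lipschitz invariance of Hausdorff and Minkowski dimension to transfer the set to a Euclidean Nikodym set, and then apply the classical projective-transformation reduction from Nikodym to Kakeya sets in $\mathbb{R}^d$. Your extra attention to the quantitative preservation of the density parameter $\lambda$ under the chart is a point the paper treats more briefly, but it does not change the argument.
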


	As a key consequence, we prove the Nikodym conjecture for $3$-dimensional manifolds with constant sectional curvature, leveraging the recent breakthrough of Wang and Zahl \cite{WangZahl}.
	\begin{corollary}
		Every Nikodym set on a Riemannian manifold $M^{3}$ with constant sectional curvature has Hausdorff dimension $3$.
	\end{corollary}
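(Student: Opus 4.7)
The plan is to deduce this corollary immediately by combining Theorem \ref{theomain2} with the recent breakthrough of Wang and Zahl \cite{WangZahl}; the corollary is essentially a packaging statement, and the proof amounts to a two-line invocation. All the substantive difficulty lies in the two ingredients themselves, namely the forthcoming proof of Theorem \ref{theomain2} (the main technical contribution of the present paper, which reduces the Nikodym problem on constant sectional curvature manifolds to the Euclidean Kakeya problem) and the deep resolution of the Kakeya conjecture in $\mathbb{R}^3$ by Wang and Zahl.

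Concretely, I would proceed as follows. First, invoke the Wang--Zahl theorem \cite{WangZahl}, which asserts that every Kakeya set in $\mathbb{R}^3$ has Hausdorff dimension equal to $3$. This precisely verifies the hypothesis of Theorem \ref{theomain2} in its Hausdorff form, with the parameters $d = 3$ and $\alpha = 3$. Second, apply Theorem \ref{theomain2} with these parameters. The conclusion is that every Nikodym set on a Riemannian manifold $M^3$ with constant sectional curvature has Hausdorff dimension at least $3$. Since any subset of a $3$-dimensional manifold has Hausdorff dimension at most $3$, the dimension is exactly $3$, which is the desired conclusion.

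There is essentially no obstacle to surmount in this deduction itself, since the entire difficulty has already been absorbed into the two ingredients. The only point requiring attention is a routine bookkeeping check that the definition of Nikodym sets employed in the statement of the corollary matches the definition used in Theorem \ref{theomain2}; because both are taken verbatim from the definition introduced earlier in this section, no reconciliation is needed. In this sense the corollary should be presented as an immediate consequence, with all of the work displaced to the proof of Theorem \ref{theomain2} that is to follow.
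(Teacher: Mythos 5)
Your proposal is correct and is exactly the paper's argument: the corollary is obtained by applying Theorem \ref{theomain2} with $d=3$ and $\alpha=3$, where the hypothesis is supplied by the Wang--Zahl resolution of the Kakeya conjecture in $\mathbb{R}^3$, and the trivial upper bound gives equality. No further comment is needed.
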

	
	Both the Kakeya sets in Euclidean space and the Nikodym sets on manifolds can be unified under the framework of \emph{curved Kakeya sets}. To this end, we consider the phase function of a H\"{o}rmander oscillatory integral operator:
	$$T_{N}f(\mathbf{x})=\int e^{iN\phi(\mathbf{x};y)}\,a(\mathbf{x} ;y)\,f(y)\,dy,\quad \text{where } (\mathbf{x}; y) = (x, t; y) \in \mathbb{B}_{\varepsilon_{0}}^{d-1} \times \mathbb{B}_{\varepsilon_{0}}^{1} \times \mathbb{B}_{\varepsilon_{0}}^{d-1}.$$
	Here $a(\mathbf{x}; y)$ is supported on $\mathbb{B}_{\varepsilon_{0}}^{d-1} \times \mathbb{B}_{\varepsilon_{0}}^{1} \times \mathbb{B}_{\varepsilon_{0}}^{d-1}$, where $\mathbb{B}_{\varepsilon_{0}}^{d}$ denotes a $d$-dimensional ball of radius $\varepsilon_{0}$ with $\varepsilon_{0}$ being a  small constant that depends on $\phi$. We impose the following non-degeneracy condition on $\phi$.
	\begin{definition}
		[H\"{o}rmander's non-degeneracy condition] We say $\phi$ is a  non-degenerate phase function, if it satisfies the following hypothesis.
		\\$(H_{1})$ ${\rm rank\,} \nabla_{\mathbf{x}}\nabla_{y}\phi(\mathbf{x} ;y)=d-1$ for all $ (\mathbf{x},y)=(x,t;y) \in  \mathbb{B}_{\varepsilon_{0}}^{d-1} \times \mathbb{B}_{\varepsilon_{0}}^{1} \times \mathbb{B}_{\varepsilon_{0}}^{d-1};$
		\\$(H_{2})$  If 
		$$ G_{0}(\mathbf{x};y):=\bigwedge_{j=1}^{d-1}\partial_{y_{j}}\nabla_{\mathbf{x}}\phi(\mathbf{x};y),$$
		then 
		$$\det\nabla_{y}^{2}\langle \nabla_{\mathbf{x}}\phi(\mathbf{x};y),G_{0}(\mathbf{x};y_{0})\rangle|_{y=y_{0}}\neq 0$$
		holds in the support of $a$.
	\end{definition}
	By a standard argument in \cite{Hor73}, a phase satisfying $(H_{1})$ and $(H_{2})$ can be reduced to the normal form
	\[
	\phi(\mathbf{x}; y) = \langle x,y\rangle + t \langle y, A y \rangle + O(|\mathbf{x}|^{2} |y|^{2} + |t| |y|^{3}),
	\]
	where $(\mathbf{x}; y) = (x, t; y) \in \mathbb{B}_{\varepsilon_{0}}^{d-1} \times \mathbb{B}_{\varepsilon_{0}}^{1} \times \mathbb{B}_{\varepsilon_{0}}^{d-1}$, and $A$ is a non-degenerate matrix.
	
	Every non-degenerate phase $\phi$ determines a family of Kakeya curves \cite{Wisewell05}.
	
	\begin{definition}[Kakeya Curves]\label{curved}
		For $y \in \mathbb{B}_{\varepsilon_{0}}^{d-1}$, $\mathbf{x} \in \mathbb{B}_{\varepsilon_{0}}^{d-1} \times \mathbb{B}_{\varepsilon_{0}}^{1}$, and $0 < \delta < \varepsilon_{0}$, define the Kakeya curves associated with $\phi$ as
		$$
		\Gamma_{y}^{\phi}(\mathbf{x}) = \{ \mathbf{x}' \in \mathbb{B}_{\varepsilon_{0}}^{d-1} \times \mathbb{B}_{\varepsilon_{0}}^{1} : \nabla_{y}\phi(\mathbf{x}';y) = \nabla_{y}\phi(\mathbf{x};y) \},
		$$
		and the associated $\delta$-tubes
		$$
		T_{y}^{\delta, \phi}(\mathbf{x}) = \{ \mathbf{x}' \in \mathbb{B}_{\varepsilon_{0}}^{d-1} \times \mathbb{B}_{\varepsilon_{0}}^{1} : |\nabla_{y}\phi(\mathbf{x}';y) - \nabla_{y}\phi(\mathbf{x};y)| < \delta \}.
		$$
	\end{definition}
	
	\begin{definition}[Curved Kakeya sets]\label{curvedK}
		A set $E \subset \mathbb{R}^d$ is called a \emph{curved Kakeya set} (associated with $\phi$) if for every $y \in \mathbb{B}_{\varepsilon_0}^{d-1}$, there exists an $\omega \in \mathbb{B}_{\varepsilon_0}^{d-1}$ such that 
		$$
		\Gamma^\phi_{y}((\omega, 0)) \subset E.
		$$
	\end{definition}

	Clearly, the notion of a curved Kakeya set generalizes the notion of a standard Kakeya set in $\mathbb{R}^d$. Indeed, if we take the phase function as  
	$$
	\phi(x, t; y) = \langle x,y\rangle + t\,h(y),
	$$ 
	where $h(y)$ has a non-degenerate Hessian, the corresponding curves are straight lines in $\mathbb{R}^d$, as in the standard Kakeya problem. It is known (see, e.g., \cite{MS, DGGZ24}) that a Nikodym set on manifolds forms a curved Kakeya set when the phase function is chosen to be the Riemannian distance function.

	Wisewell \cite{Wisewell05} showed that the sharp Hausdorff dimension lower bound for general curved Kakeya sets is $\frac{d+1}{2}$ in $\mathbb{R}^d$ for odd dimensions $d$. Additionally, Bourgain and Guth \cite{BG11} improved this exponent to $\frac{d+2}{2}$ for even dimensions $d$.
	
	These bounds on Hausdorff dimension are sharp in the sense that there exist some curved Kakeya sets whose Hausdorff dimension is exactly $\lceil\frac{d+1}{2}\rceil$. The associated curved Kakeya set may concentrate on a curved $\lceil\frac{d+1}{2}\rceil$-dimensional surface, a phenomenon now known as Kakeya compression. More shockingly, even if we consider translation-invariant phase functions, the associated curved Kakeya set could have Hausdorff dimension $\lceil \frac{d+1}{2} \rceil$. To be more specific, we introduce the translation-invariant condition.
	
	\begin{definition}[Translation-invariant condition \cite{Bourgain91,CGGHIW24}]
		A non-degenerate phase $\phi:\mathbb{B}_{\varepsilon_{0}}^{d-1} \times \mathbb{B}_{\varepsilon_{0}}^{1}\times\mathbb{B}_{\varepsilon_{0}}^{d} \rightarrow \mathbb{R}$ is \emph{translation-invariant} if it is of the form
		$$
		\phi(x,t;y)=\langle x,y\rangle+\psi(t;y)
		$$
		for some $\psi \in C^{\infty}((-\varepsilon_0,\varepsilon_0)\times \mathbb{B}_{\varepsilon_{0}}^{d-1} )$ satisfying $\psi(0;y)=0$ for all $y \in \mathbb{B}_{\varepsilon_{0}}^{d-1}$.
	\end{definition}
	
	Bourgain \cite{Bourgain91} constructed a translation-invariant phase function in $\mathbb{R}^3$ with extreme Kakeya compression, given by
	$$
	\phi(x,t;y) = x_1 y_1 + x_2 y_2 + t\,y_1 y_2 + \frac{t^2}{2}\,y_1^2.
	$$  
	Many more similar examples have since been constructed. To surpass these generally sharp bounds, additional conditions on the phase function are required.

	Guo, Wang, and Zhang \cite{GWZ22} introduced a condition on phase functions, known as Bourgain's condition. Under this condition, they obtained improved associated restriction estimates and conjectured that the critical exponent coincides with that of the restriction conjecture when the phase function satisfies Bourgain's condition. In particular, they demonstrated that the polynomial Wolff axiom holds under Bourgain's condition, providing an effective tool to avoid Kakeya compression.

	\begin{definition}
		[Bourgain's condition \cite{Bourgain91,GWZ22}]
		Let $\phi$ be a phase function satisfying H\"{o}rmander's non-degeneracy condition. We say that it satisfies Bourgain's condition at $(\mathbf{x}_{0};y_{0})$ if
		\begin{equation}
			(G_{0} \cdot \nabla_{\mathbf{x}})^{2}\partial_{y_{i}y_{j}}^{2}\phi(\mathbf{x}_{0};y_{0})= C(\mathbf{x}_{0};y_{0})(G_{0}\cdot \nabla_{\mathbf{x}})\partial_{y_{i}y_{j}}^{2}\phi(\mathbf{x}_{0};y_{0})
		\end{equation}
		for any $1 \leq i,j \leq d-1$. Here, $C(\mathbf{x}_0;y_0)$ is a function which depends on $\mathbf{x}_0$ and $y_0$.
	\end{definition}
	
	Dai, Gong, Guo, and Zhang \cite{DGGZ24} studied curved Kakeya sets for phase functions satisfying Bourgain's condition. They proved that the Riemannian distance function on a Riemannian manifold with a real analytic metric satisfies Bourgain's condition if and only if the sectional curvature is constant. This represents a remarkable connection between the Nikodym problem on manifolds with constant curvature and the curved Kakeya problem under Bourgain's condition. They also showed that the curved Kakeya sets associated with Bourgain's condition have the same Hausdorff dimension as the standard Kakeya sets in \cite{HRZ}. 
	
	It is worth noting that, recently, Chen, Gan, Guo, Hickman, Iliopoulou, and Wright \cite{CGGHIW24} studied real analytic phase functions that satisfy the translation-invariant condition and a certain Kakeya non-compression condition. They proved that the Hausdorff dimension of the corresponding curved Kakeya sets is strictly greater than $2$ in $\mathbb{R}^{3}$. Their Kakeya non-compression condition is a general condition which is rather weak, as it excludes the case where a Kakeya set concentrates on a hypersurface. In addition, the Kakeya non-compression condition is stable with respect to perturbations, while Bourgain's condition is in general unstable with respect to perturbations (see, e.g., \cite{MS,Sogge-Xi}).

	We consider translation-invariant phase functions that satisfy Bourgain's condition. We show that the corresponding curved Kakeya set is a standard Kakeya set in disguise via a diffeomorphism. As a result, the curved Kakeya set corresponding to such a phase function inherits all the known results for standard Kakeya sets. 
	
	\begin{theorem}\label{theomain}
		For a translation-invariant phase $\phi$ satisfying Bourgain's condition, there exists a diffeomorphism $\kappa(x,t)$ such that
		$$
		\phi(\kappa(x,t); y) = \langle x,y\rangle + t\,h(y) + q(y)+f(t),
		$$
		where $h(y)$ has a non-degenerate Hessian. Consequently, the corresponding curved Kakeya sets are mapped via a diffeomorphism to standard Kakeya sets in $\mathbb{R}^d$, thereby inheriting all the known results for standard Kakeya sets.
	\end{theorem}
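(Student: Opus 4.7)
The plan is to exploit the translation-invariant form $\phi(x,t;y) = \langle x, y\rangle + \psi(t;y)$ (with $\psi(0;y) = 0$) and to translate Bourgain's condition into a separable ODE for $\psi$ in the variable $t$. Since $\partial_{y_j}\nabla_{\mathbf{x}}\phi = (e_j, \partial_{y_j}\partial_t\psi)$, a direct wedge computation gives the Kakeya tangent vector $G_0 = (-\nabla_y\partial_t\psi, 1)$, so the operator $G_0\cdot\nabla_{\mathbf{x}} = -\nabla_y\partial_t\psi\cdot\nabla_x + \partial_t$ reduces to $\partial_t$ on any function independent of $x$. Because $\partial^2_{y_iy_j}\psi$ has no $x$-dependence, Bourgain's condition becomes
\[
\partial_t^2 \partial^2_{y_iy_j}\psi(t;y) = C(t;y)\, \partial_t\partial^2_{y_iy_j}\psi(t;y), \qquad 1 \leq i,j \leq d-1.
\]

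Writing $A_{ij}(t;y) := \partial_t\partial^2_{y_iy_j}\psi$, the key point is that the same scalar $C$ works for every pair $(i,j)$, so all $A_{ij}$ satisfy the same linear ODE in $t$. Separating variables yields $A_{ij}(t;y) = B_{ij}(y)\, G(t;y)$ with $B_{ij}(y) := A_{ij}(0;y)$ and $G(t;y) := \exp\int_0^t C(s;y)\,ds$. Integrating in $t$ and using $\partial^2_{y_iy_j}\psi(0;y) = 0$ gives
\[
\partial^2_{y_iy_j}\psi(t;y) = B_{ij}(y)\, H(t;y), \qquad H(t;y) := \int_0^t G(s;y)\,ds.
\]

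The heart of the argument is to upgrade this to $H = H(t)$. For this I would use two symmetries. Since $B_{ij}(y) = \partial_t\partial^2_{y_iy_j}\psi(0;y)$, the gradient $\partial_{y_k}B_{ij} = \partial_t\partial^3_{y_iy_jy_k}\psi(0;y)$ is totally symmetric in $(i,j,k)$. At the same time $\partial^3_{y_iy_jy_k}\psi = (\partial_{y_k}B_{ij})H + B_{ij}\partial_{y_k}H$ is totally symmetric in $(i,j,k)$, and comparing the two gives
\[
B_{ij}\,\partial_{y_k}H = B_{ik}\,\partial_{y_j}H \qquad \text{for all } i,j,k.
\]
The normal form of $\phi$ forces $B(0) = 2A$ to be nondegenerate, so $B(y)$ stays invertible on a small neighborhood. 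If $\nabla_y H$ were nonzero at some point, the identity above would force every column of $B$ to be proportional to one fixed column, so $B$ would have rank at most one—impossible when $d\geq 3$. Hence $\nabla_y H\equiv 0$, and $H = H(t)$. Total symmetry of $\partial_{y_k}B_{ij}$ then yields $B_{ij}(y) = \partial^2_{y_iy_j}\tilde h(y)$ for some $\tilde h$ with nondegenerate Hessian, and integrating twice in $y$ gives
\[
\psi(t;y) = H(t)\tilde h(y) + \langle a(t), y\rangle + b(t), \qquad H(0) = 0,\ a(0) = 0,\ b(0) = 0.
\]

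Finally, since $H'(0) = G(0;0) = 1$, the inverse $T := H^{-1}$ is a local diffeomorphism near $0$, and setting $\kappa(x,t) := (x - a(T(t)), T(t))$ gives
\[
\phi(\kappa(x,t); y) = \langle x, y\rangle + H(T(t))\tilde h(y) + b(T(t)) = \langle x, y\rangle + t\,\tilde h(y) + b(T(t)),
\]
matching the claimed form with $h = \tilde h$, $q \equiv 0$, and $f = b\circ T$. The main obstacle is the rigidity step forcing $\nabla_y H = 0$; once that is in place, the $y$-integration and change of variables are routine. A minor technical point is that Bourgain's condition, stated pointwise in the definition, must be assumed on a full neighborhood so that these ODE identities can be integrated in $t$; this is the standard convention used under this hypothesis.
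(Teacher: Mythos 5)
Your proposal is correct and follows the same overall strategy as the paper: translate Bourgain's condition plus translation-invariance into the ODE $\partial_t^2\nabla_y^2\psi = C\,\partial_t\nabla_y^2\psi$ (your computation of $G_0=(-\nabla_y\partial_t\psi,1)$ and the reduction of $G_0\cdot\nabla_{\mathbf x}$ to $\partial_t$ is exactly the paper's starting point \eqref{eq:main}), show the multiplier is independent of $y$, and then straighten by a shear in $x$ together with a reparametrization of $t$. The two key steps are implemented differently, though. For the $y$-independence, the paper proves a standalone lemma (if $\nabla_y^2\phi_1=c(y)\nabla_y^2\phi_2$ with $\nabla_y^2\phi_2$ non-degenerate then $c$ is constant) via a $2\times 2$-minor computation with commuted third derivatives; you instead integrate the ODE first to get $\nabla_y^2\psi=B(y)H(t;y)$ and derive the rank-one contradiction $B_{ij}\partial_{y_k}H=B_{ik}\partial_{y_j}H$ from total symmetry of $\partial^3_{y_iy_jy_k}\psi$ and of $\nabla_yB$. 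These are two faces of the same mixed-partials rigidity; both require $d\ge 3$ (the relevant range here) and the non-degeneracy of $\nabla_y^2\partial_t\psi$, which for translation-invariant phases is exactly condition $(H_2)$ and which also justifies the smoothness of $C$ that both arguments quietly use. For the straightening, the paper solves the linear ODE $\mathbf B''-c\mathbf B'+\mathbf A=0$ for the shear and the nonlinear ODE $\alpha''+c(\alpha)(\alpha')^2=0$ (via Picard--Lindel\"of) for the time change; your route is more explicit, since the separation of variables hands you $H(t)$ with $H'(0)=1$ directly and the reparametrization is simply $t\mapsto H^{-1}(t)$, with the shear $x\mapsto x-a(H^{-1}(t))$ read off from the affine remainder. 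Your version even yields $q\equiv 0$, slightly cleaner than the stated normal form, and you could shortcut the final integration by noting $\tilde h(y)=\partial_t\psi(0;y)$. No gaps.
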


	Combining the recent results of Wang and Zahl \cite{WangZahl} with the theorem above, we obtain the following corollary:
	
	\begin{corollary} In $\mathbb{R}^{3}$, if a translation-invariant phase function satisfies Bourgain's condition, then the corresponding curved Kakeya sets have Hausdorff dimension $3$. \end{corollary}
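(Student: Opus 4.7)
The plan is to combine Theorem~\ref{theomain} with the three-dimensional Kakeya theorem of Wang and Zahl. First I would apply Theorem~\ref{theomain} to obtain a diffeomorphism $\kappa$ bringing $\phi$ into the normal form $\phi(\kappa(x,t);y)=\langle x,y\rangle+t\,h(y)+q(y)+f(t)$ with $h$ having non-degenerate Hessian. A short direct computation then gives $\nabla_{y}\phi(\kappa(x,t);y)=x+t\,\nabla h(y)+\nabla q(y)$, so the equation $\nabla_{y}\phi(\kappa(\mathbf{x}');y)=\nabla_{y}\phi(\kappa(\mathbf{x});y)$ collapses to $x'-x=(t-t')\,\nabla h(y)$. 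Equivalently, $\kappa^{-1}$ sends every Kakeya curve $\Gamma_{y}^{\phi}$ to a genuine straight line in $\mathbb{R}^{3}$, whose direction vector is $(-\nabla h(y),1)$ and depends only on $y$.

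Next I would observe that because $h$ has a non-degenerate Hessian, the map $y\mapsto\nabla h(y)$ is a local diffeomorphism on $\mathbb{B}_{\varepsilon_{0}}^{2}$, so the set of directions $\{(-\nabla h(y),1):y\in\mathbb{B}_{\varepsilon_{0}}^{2}\}$, after normalization, sweeps out an open subset of $S^{2}$. Consequently, if $E\subset\mathbb{R}^{3}$ is a curved Kakeya set associated with $\phi$, then $\kappa^{-1}(E)$ is a standard Kakeya-type set in $\mathbb{R}^{3}$, containing a unit line segment in every direction from a two-dimensional open set of directions.

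Finally, since $\kappa$ is a diffeomorphism on a compact neighborhood it is bi-Lipschitz, so it preserves Hausdorff dimension and one has $\dim_{\mathcal{H}}E=\dim_{\mathcal{H}}\kappa^{-1}(E)$. Invoking the breakthrough of Wang and Zahl \cite{WangZahl}, which asserts that every Kakeya set in $\mathbb{R}^{3}$ has Hausdorff dimension $3$, one concludes $\dim_{\mathcal{H}}E=3$.

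The only delicate point I anticipate is that $\kappa^{-1}(E)$ realizes line segments only for directions in an open cap of $S^{2}$, rather than for all of $S^{2}$. I expect this to be cosmetic rather than a real obstacle: the Wang--Zahl theorem applies to any Kakeya set whose direction set contains a small open cap (via an affine reduction sending the cap to a reference cap, and by the standard countable-covering argument on $S^{2}$), so no modification of their theorem is required. Verifying this reduction is the only step that would require more than a line or two of bookkeeping.
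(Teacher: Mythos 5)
Your argument is correct and is essentially the proof the paper gives in Section \ref{sec trans}: straighten the phase via Theorem \ref{theomain}, note that the defining equation of each Kakeya curve collapses to $x'-x=(t-t')\nabla h(y)$ so the curves become genuine lines with direction depending only on $y$, and transfer the Wang--Zahl bound through the bi-Lipschitz diffeomorphism, which preserves Hausdorff dimension. Your closing remark about the direction set being only an open cap of $S^{2}$ (handled by finitely many rotations together with the fact that the Hausdorff dimension of a finite union is the maximum of the dimensions) is a detail the paper elides, and you are right that it is harmless.
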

	
	To sum it up, in Theorem \ref{theomain2} and \ref{theomain}, we reduce two types of curved Kakeya problems under Bourgain's condition to the standard Kakeya problem in $\mathbb{R}^{d}$ for any dimension $d \geq 3$. Specifically, in both cases, the corresponding curved Kakeya problem inherits all the known dimension bounds associated with the standard Kakeya problem, as well as the associated maximal function bounds. See Proposition \ref{Prop} and \ref{Prop4}.

	\subsection{Other Nikodym-type sets on manifolds}
	
	Falconer \cite{Falconer} demonstrated the existence of a $(d,d-1)$-Nikodym set in $\mathbb{R}^{d}$. Specifically, there exists a Borel set $N \subset \mathbb{R}^{d}$ with Lebesgue measure zero such that for every $x \in \mathbb{R}^{d} \setminus N$, there is a hyperplane $V$ passing through $x$ with $V \setminus \{x\} \subset N$. Mitsis \cite{Mitsis} further established that these sets have full Hausdorff dimension.
	
	The notion of a $(d,k)$-Nikodym set naturally extends to a Riemannian manifold locally via the exponential map (see the definitions in Section \ref{Nikodym type}). Moreover, we prove the existence of measure zero $(d,d-1)$-Nikodym sets on manifolds with constant sectional curvature and show that their Hausdorff dimension equals $d$.
	
	\begin{theorem}
		For a $d$-dimensional manifold with constant sectional curvature, $(d,d-1)$-Nikodym sets exist. Moreover, every $(d,d-1)$-Nikodym set has Hausdorff dimension $d$.
	\end{theorem}
	
	Furstenberg introduced an interesting problem in the plane that generalizes both the Kakeya and the Nikodym problems. He considered a set $E$ with the following property: for $s \in (0,1]$ there exists a family of lines $\mathcal{L}$ in the plane, covering every direction, such that 
	$
	\dim_{\mathcal H}(E \cap l) \geq s
	$
	for every $l \in \mathcal{L}$. He conjectured that any such set $E$ must have Hausdorff dimension at least 
	$
	\frac{3s+1}{2}.
	$
	Later, these sets were generalized to $(s,t)$-Furstenberg sets.
	
	\begin{definition}[$(s,t)$-Furstenberg Sets]
		For $s \in (0,1]$ and $t \in (0,2]$, an $(s,t)$-Furstenberg set is a set $E \subset \mathbb{R}^{2}$ with the following property: there exists a family $\mathcal{L}$ of lines with 
		$
		\dim_{\mathcal H}\mathcal{L} \geq t
		$
		such that 
		$
		\dim_{\mathcal H}(E \cap l) \geq s
		$
		for every $l \in \mathcal{L}$, where $\dim_{\mathcal H}\mathcal{L}$ is defined as the Hausdorff dimension of the family of lines viewed as a subset of the affine $1$-subspaces of $\mathbb{R}^{2}$.
	\end{definition}
	
	It is now a theorem of Ren and Wang \cite{Ren-Wang} that any $(s,t)$-Furstenberg set $E$ satisfies
	$$
	\dim_{\mathcal H} E \geq \min\Bigl\{s + t,\; \frac{3s + t}{2},\; s + 1\Bigr\}.
	$$
	
	In Section \ref{Nikodym type}, we extend the notion of $(s,t)$-Furstenberg sets to Riemannian surfaces and establish the Furstenberg estimate for surfaces with constant Gaussian curvature.
	
	\begin{theorem}\label{theoFur}
		A $(s,t)$-Furstenberg set $E$ on a surface with constant Gaussian curvature satisfies
		$$
		\dim_{\mathcal H} E \geq \min\Bigl\{s + t,\; \frac{3s + t}{2},\; s + 1\Bigr\}.
		$$
	\end{theorem}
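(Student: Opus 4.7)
The plan is to reduce the Furstenberg problem on a surface $M^2$ of constant Gaussian curvature to the classical Euclidean Furstenberg problem in $\mathbb{R}^2$, and then invoke the theorem of Ren and Wang \cite{Ren-Wang}. The reduction parallels the strategy underlying Theorem \ref{theomain2}, namely to exhibit a diffeomorphism that maps geodesic arcs to straight line segments without changing Hausdorff dimensions.

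First, I would work locally via the classical projective model for each constant curvature: the gnomonic projection in the positive curvature case, the Beltrami--Klein projection in the negative curvature case, and the identity in the flat case. In each case one obtains a smooth map $\Phi\colon U\subset M^2\to \mathbb{R}^2$, defined on a neighborhood of any fixed point, such that $\Phi$ is a diffeomorphism onto its image and sends geodesic segments in $U$ to straight line segments in $\Phi(U)$. Because both $\Phi$ and $\Phi^{-1}$ are smooth, they are locally bi-Lipschitz, and hence preserve the Hausdorff dimension of subsets. Since Hausdorff dimension is determined by the countable cover of $E$ by small balls, a standard covering argument lets me replace $E$ by $E\cap U$ for a single chart, so that all geodesic segments relevant to the Furstenberg condition live inside $U$.

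Next, set $\tilde E=\Phi(E)$ and let $\mathcal{L}$ be a family of geodesics witnessing the $(s,t)$-Furstenberg property of $E$. Each $\ell\in\mathcal{L}$ is sent to a line segment, which extends uniquely to an affine line $\tilde\ell\subset\mathbb{R}^2$, and the dimension preservation of $\Phi$ yields $\dim_{\mathcal H}(\tilde E\cap\tilde\ell)\geq s$. The key remaining step is to verify that $\dim_{\mathcal H}\tilde{\mathcal L}\geq t$. Both the space of geodesics on $M^2$ and the space of affine lines in $\mathbb{R}^2$ carry canonical smooth $2$-dimensional manifold structures (with their natural metrics), and the induced map $\ell\mapsto\tilde\ell$ is smooth with smooth inverse on the relevant open set, so it is again locally bi-Lipschitz and preserves Hausdorff dimension. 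Therefore $\tilde E$ is an $(s,t)$-Furstenberg set in the Euclidean sense, and Ren--Wang's theorem gives
$$
\dim_{\mathcal H}\tilde E\;\geq\;\min\Bigl\{s+t,\;\tfrac{3s+t}{2},\;s+1\Bigr\}.
$$
Pulling back through the bi-Lipschitz map $\Phi^{-1}$ yields the same lower bound for $\dim_{\mathcal H}E$.

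The main technical obstacle is the identification of the induced map on the space of geodesics with the space of affine lines in $\mathbb{R}^2$ and the verification that this identification is a local diffeomorphism. The underlying reason this works is precisely that on a surface with constant Gaussian curvature the Riemannian distance function satisfies Bourgain's condition (as established in \cite{DGGZ24}), which, through the reduction developed for Theorem \ref{theomain2}, is exactly what is needed to straighten geodesics via a diffeomorphism preserving all relevant dimensional data. Once this identification is in place, the rest of the argument is a transfer of the Euclidean Ren--Wang inequality through a bi-Lipschitz map, which is routine.
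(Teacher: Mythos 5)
Your proposal is correct and follows essentially the same route as the paper: straighten geodesics via the gnomonic/Beltrami--Klein projections (Theorem \ref{theogeodesic}), use bi-Lipschitz invariance of Hausdorff dimension (Proposition \ref{propH}) for both the set $E$ and the induced map on the space of geodesics (the paper's map $Y:\gamma_0\times S^1\to l_0\times S^1$), and then invoke Ren--Wang. The only difference is cosmetic: the paper grounds the straightening in the explicit projective models rather than in Bourgain's condition, but the argument is the same.
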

	In a recent work of Wang and Wu \cite{WangWu2024}, a new framework was developed to study the Fourier restriction conjecture using refined decoupling estimates alongside Furstenberg estimates. Noting that relevant variable coefficient refined decoupling estimates have been established in \cite{IosevichLiuXi2022}, it would be interesting to explore whether similar ideas could be employed in conjunction with Theorem \ref{theoFur} to address restriction-type questions on manifolds with constant sectional curvature.
	
	\subsection{Organization} This paper is organized as follows. In Section \ref{sec Nik}, we investigate the Nikodym problem on manifolds with constant sectional curvature and establish its connection to the standard Kakeya problem by showing that, locally, the geodesics can be straightened via a diffeomorphism. Section \ref{Nikodym type} is devoted to the study of other Nikodym-type sets on manifolds, where we introduce and analyze $(d,k)$-Nikodym sets as well as $(s,t)$-Furstenberg sets, deriving sharp Hausdorff dimension estimates on space forms. In Section \ref{sec trans}, we focus on translation-invariant phase functions that satisfy Bourgain's condition, proving that the associated curved Kakeya sets can be mapped, through an appropriate diffeomorphism, to standard Kakeya sets, thereby inheriting all their known dimensional lower bounds.
	
	\subsection{Acknowledgements.} This project was supported by the National Key R\&D Program of China under Grant No. 2022YFA1007200 and 2024YFA1015400. C. Gao was supported by NSF of China under Grant No. 12301121.  Y. Xi was partially supported by NSF of China under Grant No. 12171424 and Zhejiang Provincial Natural Science Foundation of China under Grant No. LR25A010001. The authors would like to thank Mingfeng Chen, Shengwen Gan, Shaomin Guo, Hong Wang, and Ruixiang Zhang for some interesting conversations on this topic.

	\section{The Nikodym problem on manifolds}\label{sec Nik}
	We establish a connection between Nikodym sets on manifolds with constant curvature and standard Kakeya sets.

	\subsection{Nikodym sets on manifolds}
	Let us consider Nikodym sets on manifolds $M^d$ with constant sectional curvature for any dimension $d \ge 2$. In a small local open neighborhood of a point on $M^d$, we shall find a diffeomorphism that maps all the geodesic segments to straight line segments. This diffeomorphism can be viewed as a way of “straightening” the Nikodym sets.
	
	There are exactly three types of \emph{local} behaviors for manifolds with constant sectional curvature: the sphere $\mathbb{S}^d$, hyperbolic space $\mathbb{H}^d$, and Euclidean space $\mathbb{R}^d$. More precisely, for any such manifold with an injectivity radius of at least $10$, every unit geodesic ball is isometric to a small geodesic ball in $\mathbb{R}^d$, $\mathbb{S}^d$, or $\mathbb{H}^d$, depending on the sign of the curvature. Consequently, it suffices to study the Nikodym sets in an open neighborhood of a point in either $\mathbb{S}^d$ or $\mathbb{H}^d$. In each case, one can choose a local coordinate chart in which all geodesics are represented as straight lines.

	\begin{theorem}\label{theogeodesic}
		Let $B$ be a unit geodesic ball in a Riemannian manifold $M^d$ with constant sectional curvature and injectivity radius at least 10. Then there exists a diffeomorphism mapping $B$ onto a ball in $\mathbb{R}^d$ such that all geodesics are mapped to straight lines.
	\end{theorem}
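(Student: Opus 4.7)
The plan is to reduce to the three model spaces of constant curvature and exhibit an explicit straightening chart in each case. By the classification of constant sectional curvature geometries, together with the injectivity radius hypothesis, the unit geodesic ball $B$ is isometric to a unit geodesic ball in one of the three model spaces: $\mathbb{R}^d$, $\mathbb{S}^d$ (with appropriately rescaled round metric), or $\mathbb{H}^d$. Since the isometry of $B$ into a model space automatically sends geodesic segments to geodesic segments, the problem reduces to producing, on a small neighborhood in each model space, a diffeomorphism to an open subset of $\mathbb{R}^d$ that straightens all geodesics.

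For $\mathbb{R}^d$ we take the identity map; there is nothing to prove. For $\mathbb{S}^d$ the plan is to use the \emph{gnomonic} (central) projection: fix a point $p \in B$ and a hyperplane $H$ tangent to $\mathbb{S}^d$ at $p$, then map $q \in B$ to the intersection with $H$ of the ray from the center of the sphere through $q$. Since a great circle lies in a $2$-plane through the origin, its image under this projection lies in the intersection of that $2$-plane with $H$, which is an affine line in $H \cong \mathbb{R}^d$. Provided $B$ is chosen to lie in an open hemisphere (which is guaranteed once one rescales so that $B$ has diameter $\leq 1$ and lies inside a geodesic ball of radius $< \pi/2$), this is a diffeomorphism onto an open ball in $\mathbb{R}^d$ and, by construction, carries every geodesic segment in $B$ to a straight line segment.

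For $\mathbb{H}^d$ the analogous device is the \emph{Klein} (Beltrami--Klein) model. Realize $\mathbb{H}^d$ as one sheet of the hyperboloid $\{x_0^2 - x_1^2 - \cdots - x_d^2 = 1,\ x_0 > 0\}$ in Minkowski space, and project radially from the origin onto the hyperplane $\{x_0 = 1\}$, followed by restriction to the $d$-coordinates $(x_1,\dots,x_d)$. Hyperbolic geodesics correspond to intersections of the hyperboloid with $2$-planes through the origin, and these are sent by the radial projection to straight lines in the open unit disk of $\{x_0=1\}$. Thus the Klein chart is a diffeomorphism from $\mathbb{H}^d$ onto the open unit ball in $\mathbb{R}^d$ that straightens all geodesics, and its restriction to $B$ gives the required map.

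The content of the proof is therefore essentially bookkeeping: identify which model $B$ sits in, compose the isometry with the gnomonic or Klein chart, and verify domains of definition. No real obstacle arises beyond checking that, under our normalization of the injectivity radius, the image of $B$ lies in the domain of validity of the chosen chart (an open hemisphere in the spherical case, the full hyperboloid in the hyperbolic case); after rescaling the metric as in the standing assumption this is immediate. The key geometric inputs—that great circles project to lines under central projection and that hyperbolic geodesics project to chords in the Klein model—are classical and follow from the fact that in each case geodesics arise as intersections of the ambient space with $2$-planes through a distinguished point.
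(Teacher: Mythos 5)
Your proposal is correct and follows essentially the same route as the paper: reduce to the model spaces, then use the gnomonic (central) projection onto a tangent hyperplane for $\mathbb{S}^d$ and the Beltrami--Klein (hyperboloid-to-chord) projection for $\mathbb{H}^d$, with geodesics straightened because they arise as intersections with $2$-planes through the center of projection. The paper simply carries out the same construction in explicit coordinates.
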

	
	\begin{remark}
		A classical result in Riemannian geometry, originally due to Beltrami, shows that the ability to locally map geodesics to straight lines via a diffeomorphism characterizes constant sectional curvature. We provide an explicit constructive proof to Theorem \ref{theogeodesic} below because this construction is central to several of our main results. Moreover, Dai, Gong, Guo, and Zhang \cite{DGGZ24} demonstrated that the Riemannian distance function on a real analytic manifold satisfies Bourgain's condition if and only if the sectional curvature is constant. In other words, for real anlytic metrics, Bourgain's condition holds for the Riemannian distance function precisely when all geodesics can be locally straightened by a diffeomorphism.
	\end{remark}

	\begin{proof}
		As discussed above, it suffices to prove this theorem for $M^d = \mathbb{S}^d$ and $M^d = \mathbb{H}^d$. First, we consider the sphere
		$$
		\mathbb{S}^d = \{ (x_1, \dots, x_d, x_{d+1}) : x_1^2 + \cdots + x_d^2 + x_{d+1}^2 = 1 \},
		$$
		and restrict our attention to the neighborhood of the point
		$
		p_0 = (0, \dots, 0, -1).
		$

		We construct the map $P$ from a neighborhood of $p_0$ on the sphere to the plane $\{x_{d+1} = -1\}$. Given a point $p \in \mathbb{S}^d$, 
		$$
		p = (x_1, \dots, x_d, x_{d+1}),
		$$
		we define the diffeomorphism $P(p)$ by
		$$
		P(p) = (u_1, \dots, u_d, -1) = \Bigl( -\frac{x_1}{x_{d+1}}, \dots, -\frac{x_d}{x_{d+1}}, -1 \Bigr).
		$$
		Since we may require $-1 < x_{d+1} < 0$ in the chosen neighborhood, this is well-defined. To show that $P$ is a diffeomorphism, we consider its inverse map $F : \mathbb{R}^{d} \to \mathbb{S}^d \subset \mathbb{R}^{d+1}$ defined by
		$$
		F(u_{1}, \dots, u_{d}) = \Bigl(\frac{u_{1}}{\sqrt{u_{1}^{2}+\cdots+u_{d}^{2}+1}}, \dots, \frac{u_{d}}{\sqrt{u_{1}^{2}+\cdots+u_{d}^{2}+1}}, \frac{-1}{\sqrt{u_{1}^{2}+\cdots+u_{d}^{2}+1}}\Bigr).
		$$
		It is standard to check that any \( d \) vectors chosen from the gradient vectors \[\{\nabla F^1, \dots, \nabla F^{d+1}\}\] are linearly independent, and therefore \( P \) is indeed a local diffeomorphism.
		
		Next, consider any $2$-plane that intersects the hyperplane $X_{d+1} = -1$ and passes through the origin $\mathbf{O}$. Let $$\{(a_{1,1}, \dots, a_{1,d+1}), (a_{2,1}, \dots, a_{2,d+1}), \dots, (a_{d-1,1}, \dots, a_{d-1,d+1})\}$$ be $d-1$ linearly independent normal vectors to this $2$-plane; note that these vectors are linearly independent of $(0,\dots,0,1)$. For a geodesic on the sphere, with coordinates $(x_1,\dots,x_{d+1})$, we have the system of equations
		\begin{equation}
			\begin{pmatrix}
				a_{1,1} & a_{1,2} & \cdots & a_{1,d+1} \\
				\vdots & \vdots & \ddots & \vdots \\
				a_{d-1,1} & a_{d-1,2} & \cdots & a_{d-1,d+1} \\
			\end{pmatrix}
			\begin{pmatrix}
				x_{1} \\ x_{2} \\ \vdots \\ x_{d+1}
			\end{pmatrix}
			=
			\begin{pmatrix}
				0 \\ 0 \\ \vdots \\ 0
			\end{pmatrix}.
		\end{equation}
		By substituting the relation $u_i = -\frac{x_i}{x_{d+1}}$ for $i=1,\dots,d$, we obtain
		\begin{equation}\label{matri}
			\begin{pmatrix}
				a_{1,1} & a_{1,2} & \cdots & a_{1,d} \\
				\vdots & \vdots & \ddots & \vdots \\
				a_{d-1,1} & a_{d-1,2} & \cdots & a_{d-1,d} \\
			\end{pmatrix}
			\begin{pmatrix}
				u_{1} \\ u_{2} \\ \vdots \\ u_{d}
			\end{pmatrix}
			=
			\begin{pmatrix}
				a_{1,d+1} \\ a_{2,d+1} \\ \vdots \\ a_{d-1,d+1}
			\end{pmatrix}.
		\end{equation}
		It is clear from \eqref{matri} that the trajectory of $(u_1, u_2, \dots, u_d)$ is a straight line in $\mathbb{R}^{d}$. In other words, for any geodesic on the sphere, there exists a $2$-plane through the origin such that this plane intersects the sphere along the geodesic and the hyperplane $X_{d+1}=-1$ in a straight line (see Figure \ref{sphere}).

		\begin{figure}[h] 
			\centering
			\includegraphics[width=0.8\textwidth]{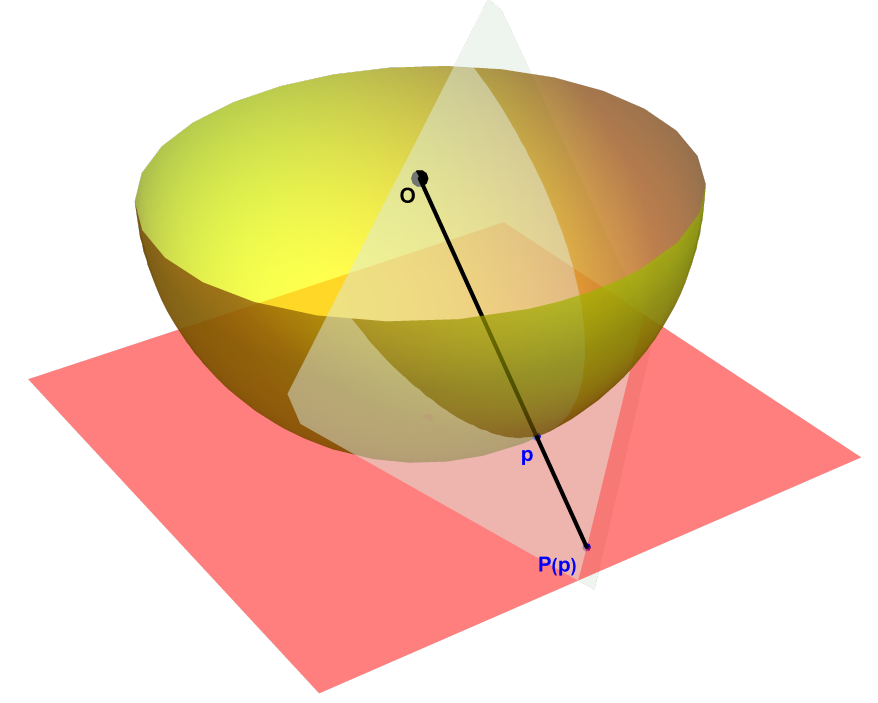}  
			\caption{The straightening map on the sphere}
			\label{sphere} 
		\end{figure}

		Now we consider the hyperbolic space $\mathbb{H}^d$. We shall make use of the Beltrami–Klein model (see, e.g., \cite[page 62 and page 138]{John}). 
		
		\begin{prop}[Beltrami–Klein model]
			The model $\mathbb{K}^{d}$ of $\mathbb H^d$ is a ball of radius $1$ centered at the origin in $\mathbb{R}^{d}$, equipped with the metric given in coordinates $(w_{1},\dots,w_{d})$ by
			$$
			\tilde g=\frac{(dw_{1})^{2}+\cdots+(dw_{d})^{2}}{1-|w|^{2}}+\frac{(w_{1}dw_{1}+\cdots+w_{d}dw_{d})^{2}}{(1-|w|^{2})^{2}}.
			$$
			In this model, any geodesic is represented as a line segment whose endpoints both lie on $\partial \mathbb{K}^{d}$.
		\end{prop}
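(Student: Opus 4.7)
The plan is to mimic the spherical case just treated, with the upper sheet of the hyperboloid
$$
\mathcal{H} = \{(x_0, x_1, \dots, x_d) \in \mathbb{R}^{d+1} : x_0^2 - x_1^2 - \cdots - x_d^2 = 1,\ x_0 > 0\}
$$
playing the role of the sphere. The hyperboloid carries the Riemannian metric induced by restricting the ambient Lorentzian form $-dx_0^2 + dx_1^2 + \cdots + dx_d^2$, and it is classical that its geodesics are exactly the intersections with $2$-planes through the origin of $\mathbb{R}^{d+1}$, the direct analog of great circles on $\mathbb{S}^d$.

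First I would define the central projection
$$
\pi(x_0, x_1, \dots, x_d) = \Bigl(\tfrac{x_1}{x_0}, \dots, \tfrac{x_d}{x_0}\Bigr)
$$
from $\mathcal{H}$ onto the hyperplane $\{x_0 = 1\}$, identified with $\mathbb{R}^d$. Its image is exactly the open unit ball $\mathbb{K}^d$, since $|w|^2 < 1$ is equivalent to $x_0^2 > x_1^2 + \cdots + x_d^2$, and $\pi$ is a diffeomorphism with smooth inverse
$$
F(w) = \frac{1}{\sqrt{1-|w|^2}}\,(1, w_1, \dots, w_d).
$$
To identify the images of geodesics, I would rerun the matrix argument from the spherical case almost verbatim: a $2$-plane through the origin of $\mathbb{R}^{d+1}$ is cut out by $d-1$ linear equations in $(x_0, \dots, x_d)$, and substituting $w_i = x_i/x_0$ converts these into an affine linear system in $(w_1, \dots, w_d)$ whose solution set is a straight line. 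The endpoints of the resulting chord on $\partial \mathbb{K}^d$ correspond to asymptotic directions of the geodesic on the light cone, giving the claimed boundary condition.

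Finally I would pull back the Lorentzian form via $F$. Writing $\rho = (1-|w|^2)^{-1/2}$ for brevity, direct differentiation gives $dx_0 = \rho^3 \sum_i w_i \, dw_i$ and $dx_i = \rho \, dw_i + \rho^3 w_i \sum_j w_j \, dw_j$. Substituting into $-dx_0^2 + \sum_i dx_i^2$ and collecting terms (using $\sum_i w_i^2 = |w|^2$) should yield exactly $\sum_i (dw_i)^2/(1-|w|^2)$ plus $\bigl(\sum_i w_i \, dw_i\bigr)^2/(1-|w|^2)^2$, matching the formula for $\tilde g$. The main obstacle is purely computational bookkeeping of the $\rho^k$ factors in this simplification; the geometric content --- that geodesics become chords --- is immediate from the projective description once the sphere case is in hand.
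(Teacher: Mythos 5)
Your proposal is correct and follows essentially the same route as the paper: central projection of the hyperboloid model onto the hyperplane $\{x_0=1\}$ (the paper writes the coordinate as $x_{d+1}$), with $2$-planes through the origin cutting the hyperboloid in geodesics and cutting the hyperplane in chords whose endpoints lie on $\partial\mathbb{K}^d$. The only difference is that you verify the metric formula by an explicit pullback of the Lorentzian form (your computation does check out, since $-\rho^6 S^2 + 2\rho^4 S^2 + \rho^6|w|^2 S^2 = \rho^4 S^2$ with $S=\sum_i w_i\,dw_i$), whereas the paper simply cites the literature for the Beltrami--Klein metric and only carries out the geodesic-straightening part.
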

		
		The Beltrami–Klein model can be similarly realized as a map from a hyperboloid in $\mathbb{R}^{d,1}$ to $\mathbb{R}^{d}$, where $\mathbb{R}^{d,1}$ is $\mathbb{R}^{d+1}$ equipped with the Minkowski metric. We now consider the hyperboloid model of $\mathbb H^d$ given by
		$$
		\mathbb{H}^d = \Bigl\{ (x_1, \dots, x_d, x_{d+1}) : x_1^2+\cdots+x_d^2-x_{d+1}^2=-1;\; x_{d+1}>0 \Bigr\} \subset \mathbb{R}^{d,1}.
		$$
		We define the diffeomorphism 
		$$
		T:\mathbb{H}^{d}\to \mathbb{K}^{d}
		$$ 
		by
		$$
		T(p) = \Bigl(\frac{x_{1}}{x_{d+1}},\dots,\frac{x_{d}}{x_{d+1}}\Bigr),
		$$
		where $p=(x_{1},\dots,x_{d},x_{d+1})\in\mathbb{H}^{d}$.
		
		To show that $T$ is a diffeomorphism, we consider its inverse map 
		$$
		K: \mathbb{R}^{d}\to \mathbb{H}^{d}\subset\mathbb{R}^{d,1}
		$$
		defined by
		$$
		K(w_{1},\dots,w_{d}) = \Bigl(\frac{w_{1}}{\sqrt{1-|w|^2}},\dots,\frac{w_{d}}{\sqrt{1-|w|^2}},\frac{1}{\sqrt{1-|w|^2}}\Bigr),
		$$
		where $|w|^2=w_{1}^{2}+\cdots+w_{d}^{2}$. It is standard to check that any \( d \) vectors chosen from the gradient vectors \(\{\nabla K^1, \dots, \nabla K^{d+1}\}\) are linearly independent, and therefore \( T \) is indeed a local diffeomorphism.
		
		This diffeomorphism maps $\mathbb{H}^{d}$ in $\mathbb{R}^{d,1}$ to the unit ball $\mathbb{K}^{d}$ in the hyperplane 
		$$
		\{(x_{1},\dots,x_{d},x_{d+1}) : x_{d+1}=1\}.
		$$
		For any $2$-plane that intersects this hyperplane and passes through the origin $\mathbf{O}$, the intersection of this hyperplane with $\mathbb{H}^{d}$ is a geodesic. Moreover, the intersection of this hyperplane with the hyperplane $\{x_{d+1}=1\}$ is a line segment, as illustrated in Figure \ref{Hyper}. The diffeomorphism $T$ maps this geodesic to a line segment. In other words, when considering hyperbolic space in the Beltrami–Klein model, the geodesic becomes a line segment with endpoints on the boundary $\partial\mathbb{K}^{d}$. See Figure \ref{Klein}.

		\begin{figure}[h] 
			\centering
			\includegraphics[width=0.7\textwidth]{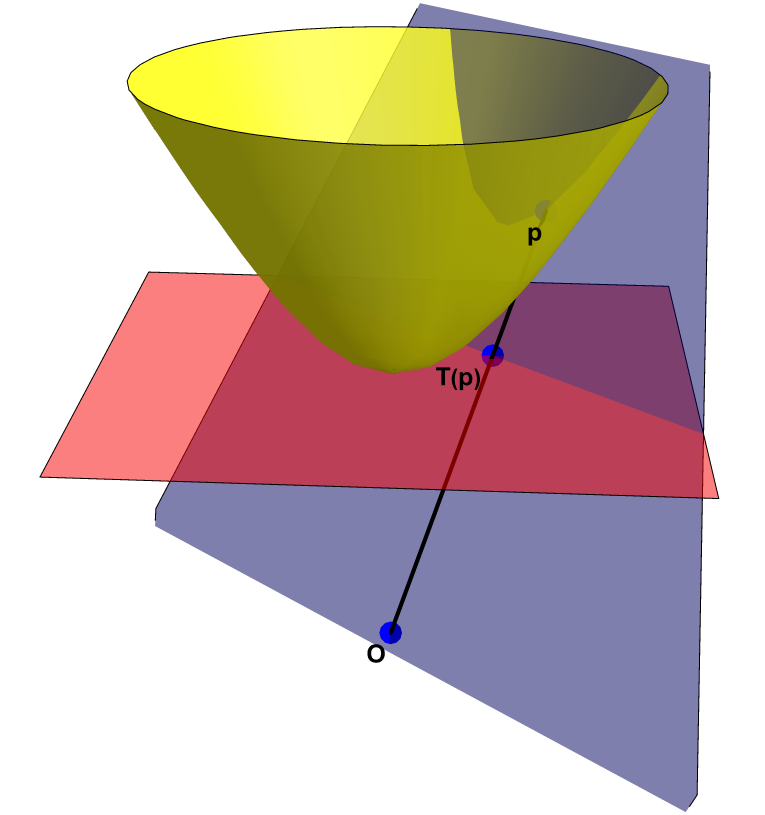}  
			\caption{The straightening map on the hyperbolic space}
			\label{Hyper} 
			
		\end{figure}
		
		\begin{figure}[h] 
			\centering
			\includegraphics[width=0.7\textwidth]{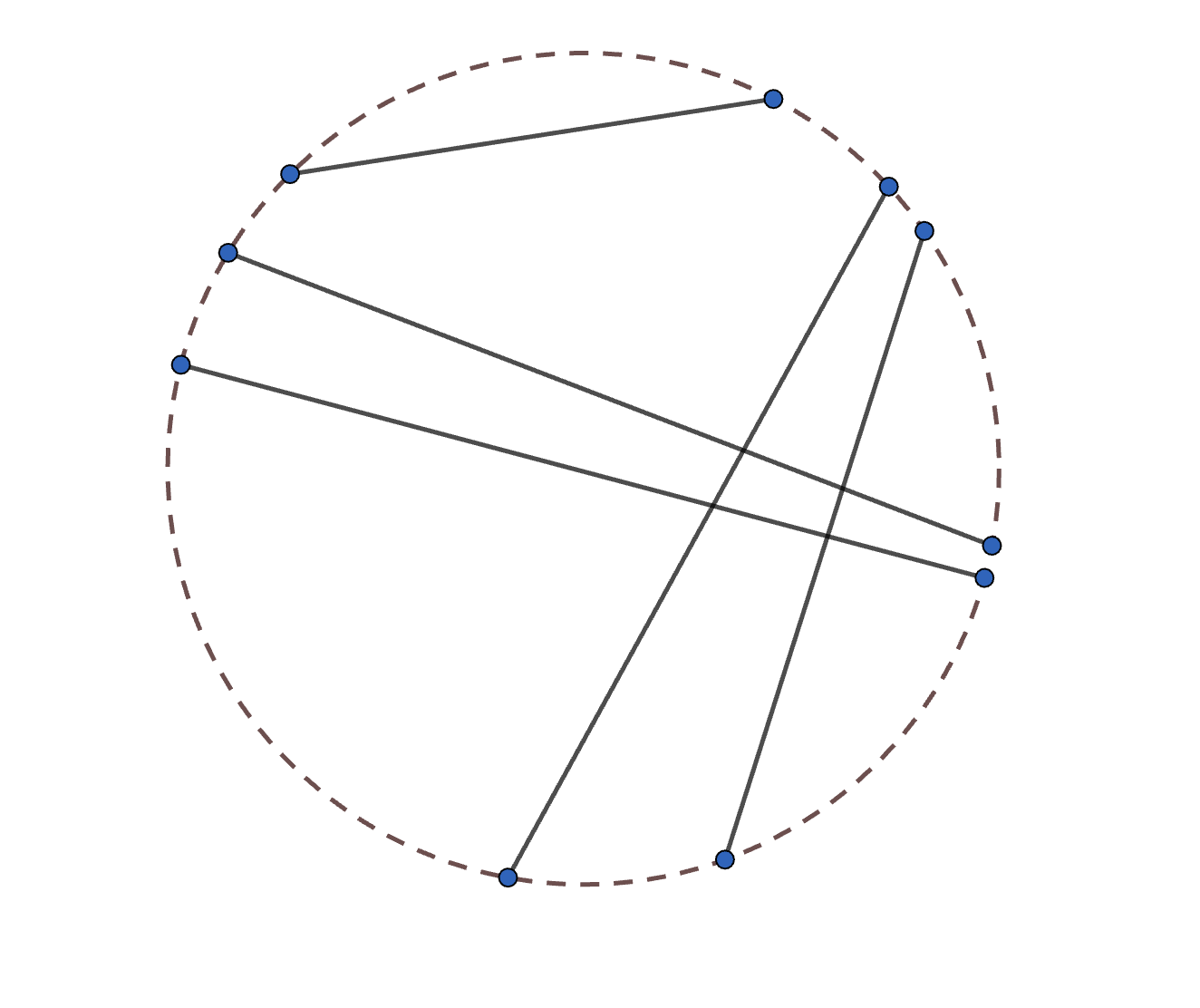}  
			\caption{Geodesics in the Beltrami-Klein model}
			\label{Klein}
		\end{figure}
		
	\end{proof}
	
	By Theorem \ref{theogeodesic}, we can map a Nikodym set on a manifold with constant sectional curvature to a Nikodym set consisting of straight lines, even though the metric on the image is not the Euclidean one. However, this does not affect the Hausdorff dimension, since such a local diffeomorphism is bi-Lipschitz with uniformly bounded Lipschitz constants in any compact set. We summarize this observation in the following proposition.
	
	\begin{prop}\label{propH}
		Let $g$ and $\tilde g$ be two Riemannian metrics on a open set $A \subset \mathbb{R}^d$ with compact closure, and let $N \subset A$ be a subset. Then the Hausdorff (Minkowski) dimension of $N$ with respect to $g$ is equal to that of $N$ with respect to $\tilde g$.
	\end{prop}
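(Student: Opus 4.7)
The plan is to reduce Proposition \ref{propH} to the elementary fact that bi-Lipschitz homeomorphisms preserve both Hausdorff and Minkowski dimension. The key input is that on a relatively compact open set, any two smooth Riemannian metrics induce pointwise inner products whose coefficient matrices are uniformly comparable.

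First I would use compactness of $\overline{A}$ together with continuity and positive-definiteness of the coefficient matrices $g_{ij}(x)$ and $\tilde g_{ij}(x)$ to produce constants $0 < c \leq C$ such that, for every $x \in A$ and every $v \in \mathbb{R}^d$,
$$c\, g_x(v,v) \;\leq\; \tilde g_x(v,v) \;\leq\; C\, g_x(v,v),$$
together with analogous two-sided comparisons of both $g$ and $\tilde g$ with the ambient Euclidean inner product. Passing from pointwise comparability of inner products to lengths, any piecewise smooth curve $\gamma\subset A$ satisfies $\sqrt{c}\,L_g(\gamma) \leq L_{\tilde g}(\gamma) \leq \sqrt{C}\,L_g(\gamma)$, and similarly against the Euclidean length.

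Next, I would cover $\overline{A}$ by finitely many small Euclidean balls $B_i \Subset A$ whose radii lie below the convexity radii of both Riemannian metrics, so that for any two nearby points in $B_i$ a minimizing geodesic (in either $g$ or $\tilde g$) stays inside $B_i$. Combined with the length comparison above, this upgrades the pointwise inner-product inequality to a bi-Lipschitz equivalence of the three distance functions $d_g$, $d_{\tilde g}$, $d_{\mathrm{Eucl}}$ on each $B_i$.

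Finally I would invoke the standard principle that a bi-Lipschitz change of metric preserves $s$-dimensional Hausdorff measure up to constants (hence Hausdorff dimension) and scales box-counting numbers by at most a multiplicative change of radius (hence Minkowski dimension); writing $N$ as the finite union of $N\cap B_i$ and using stability of both dimensions under finite unions yields the conclusion. The only mild subtlety — and essentially the only thing requiring care — is the distinction between the intrinsic Riemannian distance on the open set $A$ (which a priori may involve curves that leave a given local chart) and the distance in the smaller balls $B_i$; the choice of $B_i$ inside the convexity radii of both metrics eliminates this issue.
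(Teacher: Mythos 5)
Your proposal is correct and follows essentially the same route as the paper: both arguments reduce the statement to the bi-Lipschitz equivalence of the two metrics on the relatively compact set $A$ and then invoke the bi-Lipschitz invariance of Hausdorff and Minkowski dimension (the paper phrases this via uniform nesting of geodesic balls for the two metrics). Your version merely supplies more detail on why the identity map is bi-Lipschitz (uniform comparability of the metric tensors on $\overline{A}$ plus curve-length comparison), which the paper simply asserts.
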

	
	\begin{proof}We only treat the Hausdorff case; the Minkowski case is simpler and follows from the same idea.
		Fix $\alpha>0$. For $\epsilon>0$, define
		$$
		H_{\alpha}^{\epsilon}(N)=\inf\Bigl\{\sum_{j=1}^{\infty}r_{j}^{\alpha}: N\subset \bigcup_{j}B(x_{j},r_{j}),\; r_{j}<\epsilon \Bigr\},
		$$
		where $B(x_j,r_j)$ denotes a geodesic ball of radius $r_j$ centered at $x_j$ with respect to the metric $g$. Then the $\alpha$-dimensional Hausdorff measure of $N$ is
		$$
		H_{\alpha}(N)=\lim_{\epsilon\to 0}H_{\alpha}^{\epsilon}(N).
		$$
		There exists a critical exponent $\alpha_0$ such that $H_{\alpha}(N)=\infty$ for any $\alpha<\alpha_0$ and $H_{\alpha}(N)=0$ for any $\alpha>\alpha_0$. $\alpha_0$ is the Hausdorff dimension of $N$.
		
		Since $A$ has compact closure, the identity map $(A,g) \to (A,\tilde g)$ is bi-Lipschitz. Hence, there exist uniform constants $C_1, C_2 > 0$ such that for every $x \in A$ and every $r>0$, given any geodesic ball $B(x,r)$ in $(A,g)$,
		the corresponding geodesic balls $\tilde B(x,C_1^{-1}r)$ and $\tilde B(x,C_2r)$ in $(A,\tilde g)$ satisfy
		$$
		\tilde B(x,C_1^{-1}r)\subset B(x,r)\subset \tilde B(x,C_2r) \quad \text{for all } x\in A.
		$$
		Thus, any covering $\{B(x_j,r_j)\}$ of $N$ with respect to $g$ gives rise to a covering $\{\tilde B(x_j,C_2r_j)\}$ with respect to $\tilde g$, and any covering $\{\tilde B(x_j,s_j)\}$ of $N$ with respect to $\tilde g$ gives rise to a covering $\{B(x_j,C_1s_j)\}$ with respect to $g$. Therefore the quantity $\sum_{j=1}^{\infty}r_{j}^{\alpha}$ defining $H_{\alpha}^{\epsilon}$ is always comparable for either metric.
		Taking the infimum over all such coverings and then letting $\epsilon\to 0$, it follows that the Hausdorff measures computed with respect to $g$ and $\tilde g$ vanish or are infinite simultaneously. Therefore, the Hausdorff dimension of $N$ is the same under both metrics.
	\end{proof}

	As a consequence of Theorem \ref{theogeodesic} and Proposition \ref{propH}, we can reduce the Nikodym problem on manifolds with constant sectional curvature to the Nikodym problem in $\mathbb{R}^d$. Moreover, using Theorem 11.11 in \cite{Mattila}, one may further reduce the study of a Nikodym set in $\mathbb{R}^d$ to that of a standard Kakeya set.
	
	\begin{theorem}[\cite{Mattila}]
		If $1 \leq s \leq d$, and there exists a Nikodym set in $\mathbb{R}^{d}$ of Hausdorff dimension $s$, then there is a Kakeya set in $\mathbb{R}^{d}$ of Hausdorff dimension $s$. In particular, the Kakeya conjecture implies the Nikodym conjecture.
	\end{theorem}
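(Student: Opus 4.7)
The plan is to construct an explicit projective diffeomorphism that interchanges the roles of ``point on a line'' and ``direction of a line'', thereby turning the Nikodym structure into a Kakeya structure. Writing coordinates in $\mathbb{R}^d$ as $(x,t) \in \mathbb{R}^{d-1} \times \mathbb{R}$, I would use the map
$$
F(x,t) = \Bigl(\frac{x}{t},\, \frac{1}{t}\Bigr), \qquad t \neq 0,
$$
which is a diffeomorphism of $\{t>0\}$ onto itself, hence bi-Lipschitz on any compact set bounded away from $\{t=0\}$. A direct substitution shows that the affine line $\{(a+sb,\,s): s \in \mathbb{R}\}$ is mapped by $F$ onto the line $\{(b+ua,\,u) : u \in \mathbb{R}\}$; in other words, $F$ swaps the ``base point'' $a$ (where the original line meets the horizontal reference hyperplane) with the ``slope'' $b$.

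Given a Nikodym set $\Omega \subset \mathbb{R}^d$ with $\dim_{\mathcal H} \Omega = s$, fix $\lambda$ close to $1$ so that $\Omega^{\star}_{\lambda}$ has positive Lebesgue measure. After a rigid motion and a localization, I would arrange that a positive-measure subset $E \subset \Omega^{\star}_{\lambda}$ lies in a box $\mathbb{R}^{d-1} \times [c,2c]$ and that the associated unit segments $\gamma_p$ all make a definite angle with the horizontal hyperplane. Pigeonholing over the direction of $\gamma_p$ across a countable cover of the direction space, I would pass to a further positive-measure subset on which the slopes $b(p)$ lie in a small cap. Applying $F$ and using the swap identity, the images $F(\gamma_p) \subset F(\Omega)$ are line segments whose slopes are the original base points $a(p)$; by Fubini, the resulting set of slopes has positive $(d-1)$-dimensional measure.

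Since $F$ is bi-Lipschitz on the localized region, $\dim_{\mathcal H} F(\Omega) = s$, and $F(\Omega)$ contains at least a $\lambda$-fraction of a unit line segment in a positive-measure family of directions. A standard construction — taking a countable union of rotated copies to fill $S^{d-1}$, which does not change the Hausdorff dimension — upgrades this to a Kakeya-type set hitting every direction. The main obstacle, and the step where care is needed, is to reconcile the quantitative Nikodym condition $|\gamma_p \cap \Omega| \geq \lambda |\gamma_p|$ with the classical Kakeya requirement that $\gamma_p$ be entirely contained in the set. I would handle this by letting $\lambda_n \to 1$ along a sequence and observing that the resulting ``$\lambda_n$-Kakeya'' sets satisfy the same Hausdorff dimension lower bound as true Kakeya sets, since removing an arbitrarily small measure subset along each line does not lower the Hausdorff dimension of the ambient set; a limiting argument (or a diagonalization) then transfers the dimension $s$ to a genuine Kakeya set. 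The delicacy is performing this limit carefully so that one both preserves the dimension $s$ and produces a compact Kakeya set with one segment in every direction of $S^{d-1}$.
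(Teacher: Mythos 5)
Your core argument is exactly the paper's: the projective duality $F(\tilde x,x_d)=\tfrac{1}{x_d}(\tilde x,1)$, together with the computation showing that $F$ maps the line through $(a,0)$ with slope $b$ to the line through $(b,0)$ with slope $a$, i.e.\ swaps passing points with directions. The paper's proof consists of precisely this map and this swap property, so in substance you have reproduced it.

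The one place where your write-up goes beyond the paper is also where it is not rigorous. The theorem as quoted from Mattila uses the classical notion of a Nikodym set (for each $x$ there is a line $L$ with $L\setminus\{x\}$ entirely contained in the set), so the duality immediately produces genuine full segments and no reconciliation with a $\lambda$-fraction condition is needed; your pigeonholing, localization, and $\lambda_n\to1$ machinery are superfluous for the statement at hand. If one did insist on the paper's Definition 1.3 (the $\lambda$-density version), your proposed fix would have a real gap: the assertion that ``removing an arbitrarily small measure subset along each line does not lower the Hausdorff dimension of the ambient set'' is exactly the robustness one would need to prove, not something one can invoke — a set containing only a $\lambda$-fraction of a segment in every direction is not a Kakeya set, and completing those fractions to full segments can increase the dimension, so neither deletion nor completion yields the claimed transfer without an argument at the level of $\delta$-discretized or maximal-function estimates. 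Since the theorem is stated for the classical definition, you should simply drop that step rather than patch it.
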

	
	\begin{proof}
		We include the proof for completeness. There is a projective transformation that maps every Nikodym set to a Kakeya set. Define 
		$$
		F(\tilde x, x_{d})=\frac{1}{x_{d}}(\tilde x,1)
		$$
		for $(\tilde x, x_d) \in \mathbb{R}^{d}$ with $x_d \neq 0$. It is not hard to verify that this projective transformation maps the set of passing points (points through which the lines of the Nikodym set pass) to the set of directions. Consequently, any Nikodym set gives rise to a corresponding Kakeya set in $\mathbb{R}^{d}$.
	\end{proof}
	
	\begin{table}[h] 
		\centering 
		\caption{The best known Hausdorff dimension lower bounds for Nikodym sets on a manifold $M^d$ with constant sectional curvature (a space form), with new results highlighted.}\label{table1} 
		\label{tab:example}
		\begin{tabular}{|c|c|c|c|} 
			\hline 
			$d$ & $\dim_{\mathcal H}\geq$ & Kakeya sets in $\mathbb{R}^{d}$ & Nikodym sets in space forms \\ 
			\hline 
			$3$ & $3$ & \cite{WangZahl} & \cellcolor{yellow}Theorem 1.5 \\ 
			$4$ & $3.059\cdots$ & \cite{Katz21} & \cellcolor{yellow}Theorem 1.5\\ 
			$5$ & $\frac{18}{5}$ & \cite{HRZ, Zahl} & \cite{DGGZ24} \\
			$6$ & $7-2\sqrt{2}$ & \cite{Katz-Tao} & \cellcolor{yellow}Theorem 1.5 \\ 
			$7$ & $\frac{34}{7}$ & \cite{HRZ, Zahl} & \cite{DGGZ24} \\ 
			$8$ & $11-4\sqrt{2}$ & \cite{Katz-Tao} & \cellcolor{yellow}Theorem 1.5 \\ 
			$9$ & $6$ & \cite{HRZ, Zahl} & \cite{DGGZ24} \\ 
			$10$ & $15-6\sqrt{2}$ & \cite{Katz-Tao} & \cellcolor{yellow}Theorem 1.5 \\ 
			$11$ & $17-7\sqrt{2}$ & \cite{Katz-Tao} & \cellcolor{yellow}Theorem 1.5 \\ 
			$12$ & $\frac{31}{4}$ & \cite{HRZ, Zahl} & \cite{DGGZ24} \\ 
			$13$ & $21-9\sqrt{2}$ & \cite{Katz-Tao} & \cellcolor{yellow}Theorem 1.5 \\ 
			$14$ & $9$ & \cite{HRZ, Zahl} & \cite{DGGZ24} \\ 
			$15$ & $25-11\sqrt{2}$ & \cite{Katz-Tao} & \cellcolor{yellow}Theorem 1.5 \\ 
			$\vdots$ & $\vdots$ & $\vdots$ & $\vdots$ \\ 
			\hline 
		\end{tabular}
	\end{table}
	
	In summary, we reduce the Nikodym problem on manifolds with constant sectional curvature to the Kakeya problem. Kakeya sets in $\mathbb{R}^d$ and Nikodym sets in space forms share the same lower bounds for both Hausdorff and Minkowski dimensions. Therefore, we have finished the proof of Theorem \ref{theomain2}. As a consequence, the results for the Nikodym problem on manifolds with constant sectional curvature can be improved, as shown in Table \ref{table1}. In particular, in conjunction with the recent progress by Wang and Zahl \cite{WangZahl}, we now know that every Nikodym set on three-dimensional manifolds with constant curvature has Hausdorff dimension $3$.

	\subsection{The Nikodym maximal function on manifolds}
	
	Using Theorem \ref{theogeodesic}, similar connections for the maximal function versions of the two problems can also be drawn. We first recall the definitions of the Kakeya and Nikodym maximal functions in $\mathbb{R}^{d}$ (see, e.g., \cite{Bourgain91.2}).
	
	The \emph{Kakeya maximal function} is defined by
	$$
	K_{\delta}f(\omega)=\sup_{x \in \mathbb{R}^{d}}\frac{1}{\delta^{d-1}}\int_{T_{\omega}^{\delta}(x)}|f(y)|\,dy,
	$$
	where $T_{\omega}^{\delta}(x)$ denotes the $\delta$-neighborhood of a unit line segment passing through $x$ with direction $\omega\in \mathbb{S}^{d-1}$.
	
	Similarly, the \emph{Nikodym maximal function} is defined by
	\begin{equation}\label{eq:nikodym-max}
		N_{\delta}f(x)=\sup_{\omega\in \mathbb{S}^{d-1}}\frac{1}{\delta^{d-1}}\int_{T_{\omega}^{\delta}(x)}|f(y)|\,dy.
	\end{equation}
	We now state the corresponding maximal function conjectures.
	
	\begin{conjecture}[Maximal Kakeya conjecture]
		For any $\epsilon>0$, there exists a constant $C_\epsilon>0$ such that
		\begin{equation}\label{K}
			\|K_{\delta}f\|_{q} \le C_{\epsilon} \delta^{1-\frac{d}{p}-\epsilon}\|f\|_{p},
		\end{equation}
		where $q=(d-1)p'$ and $1\leq p\leq d$. 
	\end{conjecture}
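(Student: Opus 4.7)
The plan is to interpolate the entire range from the two endpoints. The endpoint $p=1$ gives $\|K_\delta f\|_\infty\le \delta^{1-d}\|f\|_1$ immediately from $|T^\delta_\omega(x)|\asymp \delta^{d-1}$, and Marcinkiewicz interpolation against the sharp diagonal endpoint $p=q=d$ recovers \eqref{K} for all $1\le p\le d$. Thus the entire conjecture reduces to showing
\begin{equation*}
\|K_\delta f\|_d\le C_\epsilon\delta^{-\epsilon}\|f\|_d,
\end{equation*}
or, equivalently, the restricted weak-type inequality $|\{\omega:K_\delta\chi_E(\omega)>\lambda\}|\lesssim \delta^{-\epsilon}\lambda^{-d}|E|$ for every measurable $E\subset\mathbb{R}^d$ and $\lambda\in(0,1)$.

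The second step is the dual geometric reformulation: the weak-type bound is equivalent to the assertion that for any family $\{T_j\}_{j\in J}$ of $\delta$-tubes with $\delta$-separated directions and any subsets $Y_j\subset T_j$ with $|Y_j|\ge\lambda|T_j|$, one has $|\bigcup_j Y_j|\gtrsim \delta^{\epsilon}\lambda^d |J|\delta^{d-1}$. This is the familiar geometric form of the Kakeya maximal conjecture. The third step would be to prove this geometric inequality: in $d=2$ one uses C\'ordoba's $L^2$ argument; in $d=3$ one invokes the sticky/non-sticky dichotomy and multi-scale analysis of Wang and Zahl \cite{sticky,assouad,WangZahl}; and in higher dimensions one has only partial bounds from Wolff's hairbrush \cite{Wolff95}, the arithmetic-combinatorial methods of \cite{KLT,Katz-Tao}, and the polynomial-method-based arguments of \cite{HRZ,Zahl}.

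The main obstacle, and the reason this conjecture remains open for every dimension $d\ge 4$, is precisely the third step. The hairbrush exploits transversality around a single tube but, upon iteration, must contend with families of tubes that cluster near algebraic varieties of intermediate dimension; the Wang--Zahl framework disposes of this obstruction in $\mathbb{R}^3$ by a delicate two-parameter induction, but no analogue is currently known for $d\ge 4$. Any serious attempt at the conjecture in higher dimensions would therefore have to either transplant the Wang--Zahl sticky analysis to dimension $d$, or devise a genuinely new incidence inequality for tube arrangements that realizes the conjectured full-dimensional spread; this is precisely the step I would not expect to complete by routine calculation.
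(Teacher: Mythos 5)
The statement you were asked to prove is labeled a \emph{conjecture} in the paper, and the paper contains no proof of it: the Kakeya maximal estimate is only ever used there as a hypothesis, namely the statement $K(p_0)$ appearing in the equivalences of Proposition \ref{Prop}. So there is no argument of the paper's to compare yours against, and your proposal, quite properly, does not claim to supply one.

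Your reductions are nonetheless correct and standard. The $p=1$ endpoint $\|K_\delta f\|_\infty\le\delta^{1-d}\|f\|_1$ is immediate from $|T_\omega^\delta(x)|\asymp\delta^{d-1}$; the exponent line $q=(d-1)p'$ is exactly the interpolation segment joining $(p,q)=(1,\infty)$ to $(d,d)$, and the interpolated $\delta$-exponent matches $1-\tfrac{d}{p}$, so Marcinkiewicz interpolation for the sublinear operator $K_\delta$ does reduce \eqref{K} to the $L^d\to L^d$ endpoint (the passage from restricted weak type back to strong type costs only a $\log(1/\delta)$ factor, absorbed into $\delta^{-\epsilon}$); and the duality with shadings $Y_j\subset T_j$ of direction-separated tubes is the usual geometric reformulation. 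The genuine gap is your third step, as you yourself identify: the inequality $\bigl|\bigcup_j Y_j\bigr|\gtrsim_\epsilon\delta^{\epsilon}\lambda^{d}|J|\,\delta^{d-1}$ is a theorem only for $d=2$ (C\'ordoba) and $d=3$ (via the volume estimates for unions of tubes in \cite{WangZahl}); for every $d\ge 4$ it is an open problem, and the partial bounds of \cite{Wolff95,KLT,Katz-Tao,HRZ,Zahl} fall short of $p_0=d$. In short, what you have written is an accurate reduction and survey rather than a proof, and it could not have been otherwise; the one point worth flagging is that your text is framed as a ``proof proposal'' for a statement that the paper itself presents, correctly, as unproven.
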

	
	\begin{conjecture}[Maximal Nikodym conjecture]
		For any $\epsilon>0$, there exists a constant $C_\epsilon>0$ such that
		\begin{equation}\label{N}
			\|N_{\delta}f\|_{q} \le C_{\epsilon} \delta^{1-\frac{d}{p}-\epsilon}\|f\|_{p},
		\end{equation}
		where $q=(d-1)p'$ and $1\leq p\leq d$.
	\end{conjecture}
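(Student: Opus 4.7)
The plan is to deduce the maximal Nikodym estimate \eqref{N} from the maximal Kakeya estimate \eqref{K} by means of the projective duality $F(\tilde x,x_d)=\tfrac{1}{x_d}(\tilde x,1)$ already invoked earlier in the paper. The underlying geometric fact is that $F$ is a projective transformation, so it carries lines to lines; on any compact region on which $F$ is nonsingular, it therefore carries $\delta$-tubes to $\delta$-tubes, up to multiplicative constants. The essential new feature here, compared with the set-theoretic statement, is that the map $F$ exchanges the ``point'' and ``direction'' parameters of the tubes: Nikodym tubes (fixed point, free direction) correspond to Kakeya tubes (fixed direction, free point) under $F$.

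First, I would localize $f$: by a partition of unity and standard scaling, it suffices to prove \eqref{N} with $f$ supported in a fixed compact set $\Omega\subset\{c_1\le x_d\le c_2\}$, on which $F$ is a diffeomorphism whose Jacobian is bounded above and below by positive constants. Second, I would establish a pointwise comparison of the form
\[
N_\delta f(x) \;\lesssim\; K_{c\delta}\bigl((f\circ F^{-1})\,J\bigr)(F(x))
\]
by computing, for each $x\in\Omega$ and $\omega\in S^{d-1}$, that the image $F(T_\omega^\delta(x))$ lies in a tube of comparable dimensions whose direction $\tilde\omega$ depends smoothly on $\omega$, with the base point $F(x)$ held fixed. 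The weight $J$ is a smooth, two-sided bounded factor coming from the Jacobian of $F^{-1}$ along the tube.

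Third, I would apply \eqref{K} to the right-hand side and change variables to recover $\|f\|_p$; since the exponents $q=(d-1)p'$ and the $\delta$-powers match on both sides, no loss occurs. To exhaust all directions appearing in $N_\delta f$, I would finally sum over a finite family of rotations composed with $F$, each covering a fixed open subset of $S^{d-1}$, and glue the resulting estimates using the triangle inequality.

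The main obstacle will lie in the geometric comparison in the second step: one must verify that $\delta$-neighborhoods before and after the diffeomorphism agree up to a multiplicative constant uniform in $\delta$, and that the induced direction map $\omega\mapsto\tilde\omega$ is a local diffeomorphism of $S^{d-1}$ with Jacobian bounded above and below. Both facts reduce to elementary but careful $C^2$-estimates on $F$ over the compact chart $\Omega$. A secondary technical point is that these bounds degenerate as one approaches the singular hyperplane $\{x_d=0\}$, so the partition of unity in the first step and the finite family of rotations in the third step are what allow the argument to cover all of $\mathbb{R}^d$ with uniform constants.
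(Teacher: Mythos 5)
The statement you are addressing is labeled a \emph{conjecture} in the paper, and the paper contains no proof of it: the Maximal Nikodym conjecture is an open problem, equivalent (up to $\epsilon$-losses) to the Maximal Kakeya conjecture via \cite[Theorem 4.10]{Tao99}, which is itself open in general dimension. Your argument takes \eqref{K} as an input, so even if every step were carried out it would only establish the implication $K(p_0)\Rightarrow N(p_0)$ --- precisely the known equivalence that the paper quotes from Tao and invokes in Proposition \ref{Prop} --- not the conjecture itself. A conditional derivation of one open conjecture from another cannot serve as a proof of the statement.

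Beyond this, the geometric core of your reduction is misstated. Under the projective map $F$, the pencil of lines through a fixed point $x$ lying on the distinguished hyperplane is carried to a family of \emph{parallel} lines: the common direction $\tilde\omega(x)\in S^{d-1}$ of the images is determined by $x$, while their positions vary with $\omega$. The correct pointwise comparison therefore reads $N_\delta f(x)\lesssim K_{C\delta}(\tilde f)(\tilde\omega(x))$, with a direction as the argument of $K_{C\delta}$; your version, $K_{c\delta}(\cdot)(F(x))$ with ``the base point $F(x)$ held fixed'' and the direction varying with $\omega$, describes another Nikodym-type operator and does not even typecheck, since $K_\delta g$ is a function on $S^{d-1}$. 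Moreover, for points $x$ off the distinguished hyperplane the images of the lines through $x$ under a single $F$ are \emph{not} parallel, so one cannot get by with one change of variables plus rotations: the standard proof of $K(p_0)\Rightarrow N(p_0)$ requires slicing $\mathbb{R}^d$ into hyperplanes $\{x_d=c\}$, applying a projective map adapted to each slice, and reassembling the $L^q$ norm by Fubini. That additional decomposition is the missing ingredient even in the conditional implication you are sketching.
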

	We denote by $K(p_{0})$ the statement that \eqref{K} holds for all $1\leq p\leq p_{0}$, and $N(p_{0})$ the statement that \eqref{N} holds for all $1\leq p\leq p_{0}$.  The condition $K(p_{0})$ implies that any Kakeya set has Hausdorff (and Minkowski) dimension at least $p_{0}$, while $N(p_{0})$ implies the same for Nikodym sets.
	
	Sogge \cite{Sogge99} defined the Nikodym maximal function on a Riemannian manifold $(M^d,g)$ by
	\begin{equation}\label{eq:nikodym-max-manifold}
		\mathcal N_{\delta}f(x)=\sup_{\gamma_{x}} \frac{1}{\delta^{d-1}}\int_{T_{\gamma_{x}}^{\delta}}|f(y)|\,dy,
	\end{equation}
	where $\gamma_{x}$ is a unit geodesic segment passing through $x$ and $T_{\gamma_{x}}^{\delta}$ denotes its $\delta$-neighborhood. We denote by $\mathcal N(p_{0})$ the statement: for any $\epsilon>0$, there exists a constant $C_\epsilon>0$ such that
	\begin{equation}\label{N_M}
		\|\mathcal N_{\delta}f\|_{q} \le C_{\epsilon} \delta^{1-\frac{d}{p}-\epsilon}\|f\|_{p},\quad q=(d-1)p',
	\end{equation}
	holds for all $1\leq p\leq p_{0}$.
	
	With Theorem \ref{theogeodesic} in hand, we now establish the following proposition.
	
	\begin{prop}\label{Prop}
		Let $(M^d,g)$ be a Riemannian manifold with constant sectional curvature. Then the following statements are equivalent:
		\begin{enumerate}
			\item The Kakeya maximal function estimate $K(p_{0})$ holds in $\mathbb{R}^d$.
			\item The Nikodym maximal function estimate $N(p_{0})$ holds in $\mathbb{R}^d$.
			\item The Nikodym maximal function estimate $\mathcal N(p_{0})$ holds on $(M^d,g)$.
		\end{enumerate}
		In particular, the Kakeya maximal conjecture in $\mathbb{R}^d$ is equivalent to the Nikodym maximal conjecture on manifolds with constant sectional curvature.
	\end{prop}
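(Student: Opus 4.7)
The strategy is to split the three-way equivalence into two independent reductions: (1) $\Leftrightarrow$ (2) by the classical Euclidean projective duality between the Kakeya and Nikodym maximal functions, and (2) $\Leftrightarrow$ (3) by the geodesic-straightening diffeomorphism from Theorem \ref{theogeodesic}.

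For (1) $\Leftrightarrow$ (2), I would reuse the projective transformation $F(\tilde x, x_d) = x_d^{-1}(\tilde x, 1)$ already appearing in the Nikodym-set-to-Kakeya-set reduction. This map interchanges the roles of base points and directions: a $\delta$-tube with fixed direction but varying base point is sent to a $\delta$-tube through a fixed base point with varying direction, up to bounded multiplicative distortion (since $F$ is a smooth diffeomorphism on any compact set away from $\{x_d = 0\}$ with Jacobian bounded above and below). Change of variables then converts between $K_\delta$ and $N_\delta$ as $L^p \to L^q$ operators; this equivalence is standard (see \cite{Tao99, Mattila}) and I would only sketch the bookkeeping.

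For (2) $\Leftrightarrow$ (3), I would localize on $M^d$ via a finite partition of unity $\{\chi_j\}$ subordinate to a cover by unit geodesic balls $\{B_j\}$, and apply the straightening diffeomorphism $\Phi_j\colon B_j \to U_j \subset \mathbb{R}^d$ from Theorem \ref{theogeodesic} on each piece. Since $B_j$ has compact closure, each $\Phi_j$ is bi-Lipschitz with uniformly bounded constants and has Jacobian bounded above and below. The key geometric observation is that the image $\Phi_j(T^\delta_{\gamma_x})$ of a geodesic $\delta$-tube is sandwiched between two Euclidean tubes of widths $C^{-1}\delta$ and $C\delta$ about the straight line $\Phi_j(\gamma_x)$. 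Pushing the supremum in \eqref{eq:nikodym-max-manifold} forward and changing variables yields pointwise bounds of the form
$$
\mathcal{N}_\delta f(x) \lesssim N_{C\delta}\tilde f(\Phi_j(x)), \qquad N_\delta g(y) \lesssim \mathcal{N}_{C\delta}\tilde g(\Phi_j^{-1}(y)),
$$
where $\tilde f, \tilde g$ are push-forwards weighted by the appropriate Jacobian factors, from which $\mathcal{N}(p_0) \Leftrightarrow N(p_0)$ follows after comparing $L^p$ and $L^q$ norms.

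The main technical care will be the bookkeeping of the $\delta \mapsto C\delta$ distortion produced by the bi-Lipschitz straightening, together with the harmless loss from summing the partition-of-unity pieces. Since the right-hand factor $\delta^{1-d/p-\epsilon}$ is invariant under $\delta \mapsto C\delta$ up to a multiplicative constant, all such losses are absorbed into the $\epsilon$-loss in the exponent with room to spare, and I anticipate no genuine obstacles beyond this routine accounting.
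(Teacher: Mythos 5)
Your proposal is correct and follows essentially the same route as the paper: the paper likewise splits the equivalence into the Euclidean Kakeya/Nikodym duality (citing \cite{Tao99}) and the identification of $\mathcal N_\delta$ with $N_\delta$ via the bi-Lipschitz straightening diffeomorphism of Theorem \ref{theogeodesic}, with all constant-factor distortions absorbed into the $\epsilon$-loss. Your write-up simply supplies more of the tube-comparison bookkeeping that the paper leaves implicit.
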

	
	\begin{proof}
		Every geodesic on a manifold with constant sectional curvature can be mapped to a straight line in $\mathbb{R}^d$ via a local diffeomorphism that is bi-Lipschitz (with uniformly bounded constants on compact sets). Consequently, the Nikodym maximal function on $(M^d,g)$, defined in \eqref{eq:nikodym-max-manifold}, is equivalent to the Nikodym maximal function in $\mathbb{R}^d$ (up to constant factors), and thus $\mathcal N(p_{0})$ is equivalent to $N(p_{0})$. 
		
		Moreover, as shown in \cite[Theorem 4.10]{Tao99}, the Kakeya and Nikodym maximal function estimates in $\mathbb{R}^d$, $K(p_{0})$ and $N(p_{0})$, are equivalent (in fact, the estimates imply one another up to a negligible loss in the exponents, which can be absorbed into the $\epsilon$ loss). Therefore, the three statements in the proposition are equivalent.
	\end{proof}

	\section{Nikodym-type sets on manifolds}\label{Nikodym type}
	\subsection{$(d,k)$-Nikodym sets on manifolds}
	
	We first recall the definition of $(d,k)$-Nikodym sets in $\mathbb{R}^{d}$.
	
	\begin{definition}
		A Borel set $N\subset\mathbb{R}^{d}$ with Lebesgue measure zero is called a \emph{$(d,k)$-Nikodym set} if for every 
		$
		x\in \mathbb{R}^{d}\setminus N,
		$
		there exists a $k$-plane $V$ passing through $x$ such that 
		$
		V\setminus\{x\}\subset N.
		$
	\end{definition}
	
	Analogously, we can define $(d,k)$-Nikodym sets on general Riemannian manifolds. To do so, we first introduce the notion of a locally geodesic submanifold via the exponential map. Without loss of generality, assume that the injectivity radius of the manifold is at least 10.
	
	\begin{definition}[$k$-locally geodesic submanifold]
		Let $M^d$ be a Riemannian manifold. For any point $p\in M$ and any set of $k$ linearly independent vectors in $T_p M$, these vectors locally span a $k$-dimensional submanifold via the exponential map. This submanifold is called a \emph{$k$-locally geodesic submanifold} at $p$. Moreover, its second fundamental form vanishes at $p$.
	\end{definition}
	
	\begin{definition}[$(d,k)$-Nikodym sets on manifolds]
		Let $M^d$ be a Riemannian manifold, and let $\Omega\subset M^d$ be a set of positive measure contained in a unit geodesic ball. For a fixed integer $k < d$, a set $N\subset M^d$ of measure zero is called a \emph{$(d,k)$-Nikodym set} if for every 
		$
		x\in \Omega,
		$
		there exists a $k$-locally geodesic submanifold $V$ at $x$ such that 
		$
		V\setminus\{x\}\subset N.
		$
	\end{definition}
	
	We now focus on $(d,k)$-Nikodym sets on manifolds of constant sectional curvature with dimension $d\ge 3$. In such manifolds every locally geodesic submanifold is totally geodesic.
	
	\begin{theorem}
		For a $d$-dimensional manifold with constant sectional curvature, $(d,d-1)$-Nikodym sets exist. Moreover, every $(d,d-1)$-Nikodym set has Hausdorff dimension $d$.
	\end{theorem}
	
	\begin{proof}
		Note that for manifolds with constant sectional curvature, each $k$-locally geodesic submanifold is totally geodesic. By the construction in Theorem \ref{theogeodesic}, any $(d-1)$-locally geodesic submanifold in a manifold with constant sectional curvature is mapped, via a local diffeomorphism, to a hyperplane in $\mathbb{R}^d$. Therefore, if $N\subset\mathbb{R}^d$ is a $(d,d-1)$-Nikodym set in the Euclidean setting (whose existence and full Hausdorff dimension were established by Falconer \cite{Falconer} and Mitsis \cite{Mitsis}), then its preimage under the straightening diffeomorphism is a $(d,d-1)$-Nikodym set in $M^d$. By Proposition \ref{propH}, this argument establishes both the existence and the fact that such a set must have full Hausdorff dimension $d$.
	\end{proof}
	
	\subsection{$(s,t)$-Furstenberg sets on Riemannian surfaces}
	
	We now generalize the $(s,t)$-Furstenberg problem to general Riemannian surfaces. Before defining $(s,t)$-Furstenberg sets on Riemannian surfaces, we first introduce the notion of a local model for the manifold of geodesics. As usual, we may assume that the injectivity radius of the surface is at least 10.
	
	\begin{definition}[A local model for the manifold of geodesics]\label{model}
		Let $M^2$ be a Riemannian surface, and let $\Omega$ be a local domain contained in a unit geodesic ball in $M^2$. Fix a geodesic segment $\gamma_0 \subset \Omega$ and choose a point $p \in \gamma_0$. By parallel transporting each vector $V \in S^1 \subset T_pM$ along $\gamma_0$, one obtains a unique corresponding direction along $\gamma_0$. In this way, the set $\gamma_0 \times S^1$ serves as a model for the local geodesics of $M^2$, representing (locally) all geodesics that intersect $\gamma_0$ in the directions determined by the parallel transport. The metric on this model is given by the product metric, where the first component is the restricted Riemannian metric on $\gamma_0$, and the second component is the standard metric on $S^1$.
	\end{definition}

	This is well-defined because parallel transport preserves the notion of “parallel” geodesics. In particular, if two geodesics $\gamma_1$ and $\gamma_2$ intersect $\gamma_0$ at points $p_1$ and $p_2$, respectively, and if their unit tangent vectors (after being parallel transported along $\gamma_0$) are parallel, then we say that $\gamma_1$ and $\gamma_2$ are parallel. Thus, the set $\gamma_0 \times S^1$ naturally parametrizes all local geodesics intersecting $\gamma_0$.
	
	If the Riemannian surface is the Euclidean plane, Definition \ref{model} corresponds to the family of affine $1$-subspaces of the plane (see \cite[Section 3.16]{Ma1999}).

	\begin{definition}[$(s,t)$-Furstenberg sets on Riemannian surfaces]
		For $s \in (0,1]$ and $t \in (0,2]$, an \emph{$(s,t)$-Furstenberg set} on a Riemannian surface is a set $E \subset \Omega$ (with $\Omega$ as above) satisfying the following property: there exists a family $\Gamma \subset \gamma_0 \times S^1$ of geodesics, with
		$$
		\dim_{\mathcal H}\Gamma \ge t,
		$$
		such that
		$$
		\dim_{\mathcal H}(E \cap \gamma) \ge s
		$$
		for every $\gamma \in \Gamma$. Here, the Hausdorff dimension $\dim_{\mathcal H}\Gamma$ is defined with respect to the natural metric on the model $\gamma_0 \times S^1$ for local geodesics of $M^2$.
	\end{definition}
	
	\begin{remark}
		$(s,t)$-Furstenberg sets on Riemannian surfaces generalize the notion of Nikodym sets on surfaces. In particular, a Nikodym set on a surface is always a $(1,1)$-Furstenberg set.
	\end{remark}
	
	Since the dimension of a Riemannian surface is 2, it is not affected by Kakeya compression phenomena. Therefore, it is natural to conjecture the following.
	
	\begin{conjecture}
		A $(s,t)$-Furstenberg set $E$ on a Riemannian surface satisfies
		$$
		\dim_{\mathcal H} E \ge \min\Bigl\{ s+t,\; \frac{3s+t}{2},\; s+1 \Bigr\}.
		$$
	\end{conjecture}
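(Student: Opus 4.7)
The plan is to mimic the reduction strategy used earlier in the paper and push the problem onto its Euclidean counterpart, where the Ren--Wang theorem applies. For a surface $M^{2}$ of constant Gaussian curvature, Theorem \ref{theogeodesic} furnishes a local diffeomorphism $\Phi$ from a geodesic ball to an open set in $\mathbb{R}^{2}$ that straightens every geodesic to a line segment. Applying $\Phi$ to an $(s,t)$-Furstenberg set $E$ produces a subset $\Phi(E) \subset \mathbb{R}^{2}$, and $\Phi$ sends the family $\Gamma$ of geodesics to a family $\mathcal{L}$ of affine line segments. By Proposition \ref{propH} the Hausdorff dimension of $\Phi(E)$ equals that of $E$, and for each $\gamma \in \Gamma$ we have $\dim_{\mathcal H}(\Phi(E) \cap \Phi(\gamma)) = \dim_{\mathcal H}(E \cap \gamma) \ge s$. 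Provided $\dim_{\mathcal H}\mathcal{L} \ge t$ in the standard metric on the space of affine lines, the theorem of Ren and Wang \cite{Ren-Wang} yields the desired bound.

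The technical step is verifying that the induced map from $\Gamma \subset \gamma_0 \times S^{1}$ to $\mathcal{L}$ is bi-Lipschitz with respect to the natural metrics on each side. A geodesic in the local model of Definition \ref{model} is parametrized by its intersection point with the reference geodesic $\gamma_0$ together with the direction of its tangent vector after parallel transport along $\gamma_0$, whereas an affine line is parametrized by a foot point and slope. In the straightening coordinates both parametrizations depend smoothly on their data and live on compact sets, so the induced map has uniformly bounded Jacobian and inverse Jacobian. Consequently $\dim_{\mathcal H}\mathcal{L} = \dim_{\mathcal H}\Gamma \ge t$, and the Euclidean Furstenberg estimate transfers directly to $E$, establishing Theorem \ref{theoFur}.

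For the conjecture in its full generality on an arbitrary Riemannian surface, the straightening strategy breaks down: by Beltrami's theorem one can straighten geodesics via a local diffeomorphism if and only if the Gaussian curvature is constant. The main obstacle is therefore to replace the global straightening by a more robust argument that tolerates variable curvature. A natural attempt would be a multiscale approach in which the metric is approximated at scale $\delta$ by the Euclidean metric up to an error of order $K\delta^{2}$, with the Ren--Wang bound applied scale by scale. However, a family of geodesics that is $\delta$-close to being parallel at one scale may drift apart by more than $\delta$ at a coarser scale, so $\Gamma$ cannot be treated as a single Euclidean family uniformly across scales. Resolving this will likely require adapting the multiscale decoupling and high--low arguments of Ren--Wang to the variable-coefficient setting, in the spirit of the refined decoupling estimates of \cite{IosevichLiuXi2022} and the Kakeya analysis under Bourgain's condition in \cite{GWZ22,DGGZ24}.
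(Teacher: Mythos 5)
The statement you were asked about is stated in the paper as a \emph{conjecture} for arbitrary Riemannian surfaces; the paper itself proves only the constant-Gaussian-curvature case (Theorem \ref{theoFur}), and you correctly do the same while acknowledging that the general case remains open. For that special case your argument coincides with the paper's proof: straighten geodesics via Theorem \ref{theogeodesic}, preserve Hausdorff dimension of both $E$ and the family $\Gamma$ via Proposition \ref{propH} and the bi-Lipschitz map $\gamma_0\times S^1\to l_0\times S^1$ on the space of geodesics, then invoke Ren--Wang. Your observation that Beltrami's theorem obstructs any extension of this straightening strategy to variable curvature is consistent with the paper's own remarks, so there is nothing to correct beyond noting that neither you nor the paper establishes the conjecture in full generality.
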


	As yet another consequence of Theorem \ref{theogeodesic}, we can easily establish the above conjecture for surfaces with constant Gaussian curvature using the recent breakthrough of Ren and Wang \cite{Ren-Wang} in Euclidean space.
	\begin{proof}[Proof of Theorem \ref{theoFur}]
		If the surface has constant curvature, then locally it behaves like a portion of $\mathbb{R}^{2}$, $\mathbb{S}^{2}$, or $\mathbb{H}^{2}$. By Theorem \ref{theogeodesic}, all geodesics in a small neighborhood can be mapped via a diffeomorphism to straight lines in the plane. In particular, by Proposition \ref{propH}, we map the $(s,t)$-Furstenberg set on a surface with constant Gaussian curvature to a $(s,t)$-Furstenberg set in the plane with identical Hausdorff dimension.
		
		More precisely, the model for local geodesics on the surface is given by $\gamma_0 \times S^1$, and there exists a diffeomorphism 
		$$
		Y: \gamma_{0} \times S^{1} \rightarrow l_{0} \times S^{1}
		$$
		which maps the geodesics on the surface to the corresponding elements in affine $1$-subspaces $l_{0} \times S^{1}$ in $\mathbb{R}^{2}$. Here,
		$$
		Y(z, e) = (\rho(z), \eta_z(e)),
		$$
		where $z$ is a parameter along the fixed geodesic $\gamma_0$, $\rho$ is the straightening local diffeomorphism restricted to $\gamma_0$, and $\eta_z$ is a  diffeomorphism from $S^1$ to $S^{1}$ that smoothly depends on $z$. One may easily check that $Y$ is a diffeomorphism, and thus bi-Lipschitz on any precompact subset.
		
		Since a bi-Lipschitz mapping preserves Hausdorff dimension (see Proposition \ref{propH}), the Hausdorff dimension of the family $\Gamma$ of geodesics is preserved under $Y$. Thus, the classical Furstenberg estimate in the Euclidean plane, as established by Ren and Wang \cite{Ren-Wang},
		$$
		\dim_{\mathcal{H}} E \ge \min\Bigl\{ s+t,\; \frac{3s+t}{2},\; s+1 \Bigr\},
		$$
		carries over directly to Riemannian surfaces with constant Gaussian curvature.
	\end{proof}

	\section{Translation-invariant phases under Bourgain's condition}\label{sec trans}
	\subsection{Proof of Theorem \ref{theomain}}
	We shall prove Theorem \ref{theomain} by proving a couple of lemmas.

	\begin{lemma}
		Let $\phi_{1},\, \phi_2 \in C^\infty(\mathbb{R}^{d-1})$ be two smooth functions, and suppose that $\phi_{2}$ has a non-degenerate Hessian. If there exists a smooth function $c(y)$ such that 
		\begin{equation}\label{mult}
			\nabla_{y}^{2} \phi_1(y) = c(y) \nabla_{y}^{2} \phi_2(y),
		\end{equation}
		then $c(y)$ is constant; that is, there exists a constant $c \in \mathbb{R}$ such that $c(y) \equiv c$.
	\end{lemma}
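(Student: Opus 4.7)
The plan is to differentiate the identity \eqref{mult} once more and exploit the symmetry of third-order partial derivatives to force the gradient of $c$ to vanish. Writing $v_k = \partial_{y_k} c$ and $M_{ij}(y) = \partial_{y_i y_j}^2 \phi_2(y)$, I would apply $\partial_{y_k}$ to both sides of \eqref{mult} to obtain
\begin{equation*}
\partial_{y_k y_i y_j}^3 \phi_1 \;=\; v_k\, M_{ij} \,+\, c(y)\, \partial_{y_k y_i y_j}^3 \phi_2.
\end{equation*}
Since the left-hand side is symmetric in $(i,j,k)$ and the term $c(y)\partial^3_{kij}\phi_2$ is manifestly symmetric, the tensor $v_k M_{ij}$ must also be symmetric under swapping $i$ and $k$. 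This yields the pointwise relation
\begin{equation*}
v_k\, M_{ij} \;=\; v_i\, M_{kj} \qquad \text{for all } i,j,k.
\end{equation*}

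Next, I would analyze this relation pointwise. Fix any $y$ and assume for contradiction that $v(y) \neq 0$, so some component $v_{k_0} \neq 0$. Setting $k = k_0$ in the displayed identity and solving for $M_{ij}$ gives $M_{ij} = v_i\, w_j$ with $w_j := M_{k_0 j}/v_{k_0}$. Hence the Hessian $M(y)$ factors as the rank-one matrix $v(y)\, w(y)^{T}$. Since we are in dimension $d-1 \ge 2$ and $\phi_2$ has everywhere non-degenerate Hessian, $M(y)$ has rank $d-1 \ge 2$, contradicting rank one. Therefore $v(y)=0$ at every $y$ in the (connected) domain.

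Since $\nabla c \equiv 0$, the function $c(y)$ is constant, as claimed. The proof is essentially an algebraic manipulation, so the only potential obstacle is keeping track of the tensor symmetries correctly; once the identity $v_k M_{ij}=v_i M_{kj}$ is established, the rank-one factorization step is immediate and the non-degeneracy of $\nabla^2_y \phi_2$ closes the argument with no further analytic input.
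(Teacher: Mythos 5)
Your proof is correct and follows essentially the same route as the paper's: differentiate \eqref{mult}, invoke the symmetry of third-order mixed partials, and use the non-degeneracy of $\nabla_{y}^{2}\phi_2$ to force $\nabla c\equiv 0$ on the connected domain. The only difference is in the endgame --- the paper extracts $\partial_l c=\partial_m c=0$ from a non-degenerate $2\times 2$ minor of the Hessian, whereas you derive the full symmetry $v_k M_{ij}=v_i M_{kj}$ and rule out $\nabla c\neq 0$ via a rank-one contradiction; both arguments (and indeed the lemma itself, which fails when $\phi_2$ is a function of one variable) require $d-1\ge 2$, which you state explicitly.
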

	
	\begin{proof}
		We first assume that $\phi_1$ has a non-degenerate Hessian; otherwise, if $\nabla_y^2 \phi_1(y)$ were degenerate, the equality \eqref{mult} would force $c(y) \equiv 0$, which is trivial.
		
		For convenience, denote 
		$$
		\partial_{ij}\phi_k(y) = \frac{\partial^2 \phi_k}{\partial y_i\partial y_j}(y), \quad k=1,2.
		$$
		Since both $\phi_1$ and $\phi_2$ have non-degenerate Hessians, for any fixed indices $l, m \in \{1,\dots,d-1\}$, we can choose indices $i,j$ such that the $2\times 2$ submatrix
		\begin{equation}\label{matrix}
			\begin{pmatrix}
				\partial_{il}\phi_k(y) & \partial_{im}\phi_k(y) \\
				\partial_{jl}\phi_k(y) & \partial_{jm}\phi_k(y)
			\end{pmatrix}
		\end{equation}
		is non-degenerate for $k=1,2$.
		
		By \eqref{mult}, we have
		$$
		\partial_{il}\phi_1(y) = c(y)\, \partial_{il}\phi_2(y) \quad \text{and} \quad \partial_{im}\phi_1(y) = c(y)\, \partial_{im}\phi_2(y).
		$$
		Differentiate the first equality with respect to $y_m$ and the second with respect to $y_l$:
		$$
		\partial_{ilm}\phi_1(y) = \partial_m c(y)\, \partial_{il}\phi_2(y) + c(y)\, \partial_{ilm}\phi_2(y),
		$$
		$$
		\partial_{ilm}\phi_1(y) = \partial_l c(y)\, \partial_{im}\phi_2(y) + c(y)\, \partial_{ilm}\phi_2(y).
		$$
		Subtracting these two equations yields
		$$
		\partial_m c(y)\, \partial_{il}\phi_2(y) - \partial_l c(y)\, \partial_{im}\phi_2(y) = 0.
		$$
		Similarly, one obtains
		$$
		\partial_m c(y)\, \partial_{jl}\phi_2(y) - \partial_l c(y)\, \partial_{jm}\phi_2(y) = 0.
		$$
		Since the submatrix in \eqref{matrix} (for $\phi_2$) is non-degenerate, it follows that
		$$
		\partial_l c(y) = \partial_m c(y) = 0.
		$$
		Because $l$ and $m$ are arbitrary indices, we conclude that $\nabla_y c(y) = 0$, which implies that $c(y)$ is constant.
	\end{proof}
	
	By applying the previous lemma, we can eliminate the dependence on $y$ in the constant using Bourgain's condition.
	
	\begin{corollary}\label{corophase}
		Let $\psi \in C^{\infty}(\mathbb{R}\times\mathbb{R}^{d-1})$ be a smooth function satisfying
		\begin{equation}\label{eq:rank}
			\operatorname{rank}\nabla_{y}^{2}\partial_{t}\psi(t;y)=d-1,
		\end{equation}
		and suppose there exists a smooth function $c(t;y)$ such that
		\begin{equation}\label{eq:Bourgain}
			\partial_{t}^{2}\nabla_{y}^{2}\psi(t;y)=c(t;y)\,\partial_{t}\nabla_{y}^{2}\psi(t;y).
		\end{equation}
		Then $c(t;y)$ depends only on $t$, that is, $c(t;y)=c(t)$.
	\end{corollary}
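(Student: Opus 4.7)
The plan is to freeze the $t$-variable and apply the previous lemma pointwise. For each fixed $t_0 \in \mathbb{R}$, I would set
\[
\phi_1(y) := \partial_t^{2}\psi(t_0;y), \qquad \phi_2(y) := \partial_t\psi(t_0;y),
\]
so that $\phi_1,\phi_2 \in C^\infty(\mathbb{R}^{d-1})$. The rank hypothesis \eqref{eq:rank} says that the $(d-1)\times(d-1)$ matrix $\nabla_y^{2}\phi_2(y) = \nabla_y^{2}\partial_t\psi(t_0;y)$ has full rank $d-1$, so $\phi_2$ has a non-degenerate Hessian, which is exactly the hypothesis needed to invoke the preceding lemma.

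Evaluating the Bourgain-type identity \eqref{eq:Bourgain} at $t = t_0$ yields
\[
\nabla_y^{2}\phi_1(y) = c(t_0;y)\,\nabla_y^{2}\phi_2(y),
\]
which is precisely the multiplicative relation \eqref{mult}, with the smooth scalar factor $y \mapsto c(t_0;y)$. The previous lemma then forces $c(t_0;y)$ to be constant as a function of $y$; call this constant $c(t_0)$. Since $t_0 \in \mathbb{R}$ was arbitrary, we obtain $c(t;y) = c(t)$ as claimed. Finally, smoothness of the resulting function $t \mapsto c(t)$ follows from the smoothness of $c(t;y)$ jointly in $(t;y)$: at any point $(t_0;y_0)$ where some component of $\partial_t\nabla_y^{2}\psi(t_0;y_0)$ is nonzero (such a point exists by the rank condition), one can solve for $c(t;y)$ as a smooth quotient in a neighborhood and then restrict to a fixed $y = y_0$.

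There is no essential obstacle here beyond checking that the hypotheses of the lemma apply; the rank condition \eqref{eq:rank} is tailored precisely to ensure that $\partial_t\psi(t_0;\cdot)$ has a nondegenerate Hessian for every $t_0$, which is the only nontrivial input required. The argument is essentially a reduction: the entire $y$-dependence in the Bourgain coefficient is killed by the previous lemma applied slice by slice in $t$, and smooth dependence in $t$ is inherited automatically.
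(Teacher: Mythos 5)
Your proposal is correct and matches the paper's intended argument exactly: the corollary is proved by freezing $t=t_0$ and applying the preceding lemma to $\phi_1=\partial_t^2\psi(t_0;\cdot)$ and $\phi_2=\partial_t\psi(t_0;\cdot)$, with the rank condition \eqref{eq:rank} supplying the non-degenerate Hessian hypothesis. Your additional remark on the smoothness of $t\mapsto c(t)$ is a correct (and welcome) detail that the paper leaves implicit.
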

	Recall that our phase function is given by 
	$$
	\phi(x,t;y)=\langle x,y\rangle+\psi(t;y).
	$$
	By Bourgain's condition and H\"ormander's non-degeneracy condition, we have
	\begin{equation}\label{eq:main}
		\nabla_y^2 \partial_t^2 \psi(t; y) = c(t; y)\, \nabla_y^2 \partial_t \psi(t; y).
	\end{equation}
	From Corollary \ref{corophase}, it follows that
	\begin{equation}\label{eq:zero}
		\nabla_y^2 \Bigl( \partial_t^2 \psi(t; y) - c(t) \partial_t \psi(t; y) \Bigr) = \mathbf{0}.
	\end{equation}
	Hence, there exists a smooth vector-valued function $\mathbf{A}(t) \in \mathbb{R}^{d-1}$ such that
	\begin{equation}\label{eq:A}
		\partial_t^2 \partial_y \psi(t; y) - c(t) \partial_t \partial_y \psi(t; y) = \mathbf{A}(t).
	\end{equation}
	
	Next, consider the vector-valued function $\mathbf{B}(t)=(b_{1}(t),b_{2}(t),\cdots,b_{d-1}(t)) \in \mathbb{R}^{d-1}$ that satisfies
	\begin{equation}\label{eq:B}
		\mathbf{B}''(t) - c(t)\mathbf{B}'(t) + \mathbf{A}(t) = 0.
	\end{equation}
	We define the first coordinate transformation by
	$$
	T: (x, t) \mapsto \bigl(x + \mathbf{B}(t),\, t\bigr).
	$$
	Under this transformation, the phase changes to the form
	$$
	\phi\bigl(T(x, t); y\bigr) = \langle x,y\rangle + \tilde{\psi}(t; y),
	$$
	where
	$$
	\tilde{\psi}(t; y) = \mathbf{B}(t) \cdot y + \psi(t; y).
	$$
	We denote $\phi\bigl(T(x, t); y\bigr)$ by $\tilde{\phi}(x, t; y)$.
	
	Combining \eqref{eq:A} and \eqref{eq:B}, we obtain
	\begin{equation}\label{eq:final}
		\partial_{t}^{2} \partial_{y} \tilde{\psi}(t; y) = c(t) \partial_{t} \partial_{y} \tilde{\psi}(t; y).
	\end{equation}

	We conclude the proof of Theorem \ref{theomain} by illustrating that, through the following lemma, we can perform a second smooth change of variables in \( t \) to straighten the phase function.
	\begin{lemma}
		There exists a diffeomorphism $\alpha(t)$ such that
		$$
		\tilde \phi(x,\alpha(t);y)= \langle x,y\rangle + t\, h(y) + q(y)+ f(t),
		$$
		where $h(y)$ and $q(y)$ depend purely on $y$ and $f(t)$ depends purely on $t$.
	\end{lemma}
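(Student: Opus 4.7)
The plan is to treat \eqref{eq:final} as a first order linear ODE in $t$ for the vector field $G(t;y):=\partial_t\partial_y\tilde\psi(t;y)$, solve it explicitly, and then antidifferentiate twice to obtain a product decomposition of $\tilde\psi$. The diffeomorphism $\alpha(t)$ will be the inverse of the antiderivative that appears as the $t$-factor in that product.

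First, view \eqref{eq:final} as $\partial_t G(t;y)=c(t)\,G(t;y)$ with $y$ a parameter. Integrating in $t$ gives
\[
G(t;y)=\mu(t)\,\nabla_y h(y),\qquad \mu(t):=\exp\!\Bigl(\int_0^t c(s)\,ds\Bigr),\qquad h(y):=\partial_t\tilde\psi(0;y).
\]
Since $G(t;y)=\nabla_y\bigl(\partial_t\tilde\psi(t;y)\bigr)$, this shows that $\nabla_y\bigl(\partial_t\tilde\psi(t;y)-\mu(t)h(y)\bigr)\equiv 0$, so there exists $g(t)$, depending only on $t$, with
\[
\partial_t\tilde\psi(t;y)=\mu(t)\,h(y)+g(t).
\]
Integrating once more in $t$ and setting $\tau(t):=\int_0^t\mu(s)\,ds$, $f_0(t):=\int_0^t g(s)\,ds$, $q(y):=\tilde\psi(0;y)$ yields
\[
\tilde\psi(t;y)=\tau(t)\,h(y)+f_0(t)+q(y).
\]

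Second, define $\alpha(t):=\tau^{-1}(t)$; this is a well-defined diffeomorphism on a neighborhood of $0$ because $\tau'(t)=\mu(t)>0$ (as $\mu$ is an exponential, in particular $\mu(0)=1$). Substituting, we obtain
\[
\tilde\phi\bigl(x,\alpha(t);y\bigr)=\langle x,y\rangle+t\,h(y)+q(y)+f(t),
\]
with $f(t):=f_0(\alpha(t))$, which is the desired form.

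It remains to verify that $h(y)$ has a non-degenerate Hessian. Since $\tilde\psi(t;y)-\psi(t;y)=\mathbf{B}(t)\cdot y$ is linear in $y$, H\"ormander's condition \eqref{eq:rank} gives $\operatorname{rank}\nabla_y^2\partial_t\tilde\psi(t;y)=d-1$. Evaluating at $t=0$ and using $\partial_t\tilde\psi(0;y)=h(y)$ yields $\operatorname{rank}\nabla_y^2 h=d-1$, as required. There is no real obstacle here: the key step is recognizing \eqref{eq:final} as an ODE whose solution separates the $t$- and $y$-dependence up to a $t$-only remainder, which is precisely the structure we need to straighten by a change of variables in $t$.
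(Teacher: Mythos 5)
Your proof is correct, and it takes a somewhat different (and more explicit) route than the paper. The paper works backwards: it posits the target condition $\tfrac{d^2}{dt^2}\partial_y\tilde\psi(\alpha(t);y)=0$, uses the chain rule together with \eqref{eq:final} to reduce this to the nonlinear second-order ODE $\alpha''(t)+c(\alpha(t))(\alpha'(t))^2=0$, and then invokes Picard--Lindel\"of for local existence of $\alpha$, checking afterwards that $\alpha'(0)\neq 0$ makes it a diffeomorphism. You instead integrate \eqref{eq:final} forwards as a linear first-order ODE for $G(t;y)=\partial_t\partial_y\tilde\psi(t;y)$, obtaining the explicit separated form $\tilde\psi(t;y)=\tau(t)h(y)+f_0(t)+q(y)$ with $\tau(t)=\int_0^t e^{\int_0^s c}\,ds$, and then simply set $\alpha=\tau^{-1}$; one can check that this $\alpha$ is exactly a solution of the paper's nonlinear ODE, so the two constructions produce the same change of variables. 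Your version buys an explicit formula for $\alpha$ and makes the diffeomorphism property immediate from $\tau'=\mu>0$, avoiding any appeal to an existence theorem; it also records the useful extra fact that $h(y)=\partial_t\tilde\psi(0;y)$, from which your closing verification that $\nabla_y^2h$ has rank $d-1$ follows cleanly from \eqref{eq:rank}. That last check is not in the paper's proof of the lemma but is needed for the statement of Theorem \ref{theomain}, so including it is a genuine improvement rather than a redundancy.
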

	
	\begin{proof}
		It suffices to show that there exists a diffeomorphism $\alpha(t)$ such that
		\begin{equation}\label{eq:straighten}
			\frac{d^2}{dt^2}\,\partial_{y}\tilde \psi\bigl(\alpha(t);y\bigr)=0.
		\end{equation}
		If \eqref{eq:straighten} holds, then integrating twice in $t$, we obtain
		$$
		\partial_{y}\tilde \psi\bigl(\alpha(t);y\bigr)= t\, H(y) + Q(y),
		$$
		where $H(y)$ and $Q(y)$ are conservative vector-valued functions in $\mathbb{R}^{d-1}$. A further integration in $y$ implies that
		$$
		\tilde \psi\bigl(\alpha(t);y\bigr)= t\, h(y) + q(y) + f(t),
		$$
		with $h(y)$ and $q(y)$ defined appropriately.
		
		To achieve \eqref{eq:straighten}, it suffices to find $\alpha(t)$ satisfying, via the chain rule,
		\begin{equation}\label{eq:chain}
			\alpha''(t)\,\partial_{t}\partial_{y}\tilde \psi\bigl(\alpha(t);y\bigr) + \partial_{t}^{2}\partial_{y}\tilde \psi\bigl(\alpha(t);y\bigr)\, (\alpha'(t))^{2} = 0.
		\end{equation}
		Recall that \eqref{eq:final} says
		\begin{equation}\label{eq:psiODE}
			\partial_{t}^{2}\partial_{y}\tilde \psi(t;y) = c(t)\,\partial_{t}\partial_{y}\tilde \psi(t;y).
		\end{equation}
		Substituting $t$ by $\alpha(t)$ in \eqref{eq:psiODE} and inserting into \eqref{eq:chain}, we deduce that \eqref{eq:chain} is equivalent to
		$$
		\alpha''(t) + c\bigl(\alpha(t)\bigr)\, (\alpha'(t))^{2} = 0,
		$$
		with initial conditions chosen so that $\alpha(0)=0$ and $\alpha'(0)\neq 0$.
		
		This is a nonlinear ordinary differential equation. Defining
		$$
		X_1(t)=\alpha(t),\quad X_2(t)=\alpha'(t),
		$$
		we can rewrite it as the first-order system
		\[
		\begin{cases}
			\displaystyle \frac{dX_1}{dt}=X_2, \\[1mm]
			\displaystyle \frac{dX_2}{dt}=-c\bigl(X_1\bigr)(X_2)^2.
		\end{cases}
		\]
		Since $c$ is smooth, the system is locally Lipschitz and the Picard–Lindelöf theorem guarantees the local existence (and uniqueness) of a solution. Thus, there exists a local diffeomorphism $\alpha(t)$ satisfying the desired property.
	\end{proof}
	
	Denote 
	$$\kappa(x,t):=T(x,\alpha(t)).$$ 
	It is easy to verify that $\kappa$ is a non-degenerate coordinate transformation. In fact, its Jacobian matrix is
	\[
	\begin{pmatrix}
		1 & 0 & \cdots & 0 &  b'_{1}\,\alpha'(t) \\
		0 & 1 & \cdots & 0 & b'_{2}\,\alpha'(t) \\
		\vdots & \vdots & \ddots & \vdots & \vdots \\
		0 & 0 & \cdots & 1 & b'_{d-1}\,\alpha'(t) \\
		0 & 0 & \cdots & 0 & \alpha'(t)
	\end{pmatrix}.
	\]
	Since $\alpha'(t)\neq 0$, the determinant of this matrix is nonzero, which shows that $\kappa$ is indeed a diffeomorphism. This completes the proof of Theorem \ref{theomain}.

	\begin{remark}\label{rem4}
		It is well known that one can ignore terms that depend purely on $y$ or purely on $(x,t)$ in the phase function of an oscillatory integral. In the curved Kakeya problem, the term $q(y)+f(t)$ does not alter the shape of the curve in a significant way. Thus, it is equivalent to consider the phase in the following form:
		$$
		\phi(x,t;y)=\langle x,y\rangle+t\, h(y)
		$$
		when the phase satisfies both Bourgain's condition and the translation-invariant condition.
	\end{remark}

	\subsection{Applications}
	
	Let us now turn to the associated curved Kakeya problems. We consider the curved Kakeya problem associated with a translation-invariant phase function $\phi$ that satisfies Bourgain's condition.
	
	By Theorem \ref{theomain} and Remark \ref{rem4}, there exists a local diffeomorphism under which the phase can be converted into the form
	$$
	\phi(x,t;y)=\langle x,y\rangle + t\,h(y).
	$$
	The curves determined by this phase satisfy
	$$
	x=\omega - t\,\partial_{y}h(y),
	$$
	where $\omega$ denotes the point through which the curve passes in the original coordinate plane. These curves are, in fact, straight lines. In particular, there exists a local diffeomorphism mapping the curved Kakeya sets to standard Kakeya sets, and this diffeomorphism preserves the Hausdorff dimension.
	
	Thus, all the known results for standard Kakeya sets—and consequently for Nikodym sets on manifolds with constant sectional curvature (see Table \ref{table1})—carry over to the corresponding curved Kakeya sets in this setting.

	As in Proposition \ref{Prop}, a similar conclusion can be drawn for the maximal function version of the problem. Let us recall the definition of the curved Kakeya maximal function \cite{Wisewell05}.
	
	\begin{definition}[Curved Kakeya maximal function]
		Given a phase function $\phi(\mathbf{x};y)$ that satisfies $(H_1)$ and $(H_2)$, and for $y \in \mathbb{B}_{\varepsilon_0}^{d-1}$ and $0<\delta<\varepsilon_0$, we define
		$$
		\mathcal{K}_\delta f(y) = \sup_{\omega \in \mathbb{B}_{\varepsilon_0}^{d-1}} \frac{1}{\delta^{d-1}} \int_{T_{y}^{\delta,\phi}((\omega,0))} |f|.
		$$
		Here $T_{y}^{\delta,\phi}(\omega)$ is as defined in Definition \ref{curved}.
	\end{definition}
	
	As in the standard Kakeya problem, $L^p\to L^q$ bounds for this maximal function yield lower bounds for the Hausdorff (and Minkowski) dimension of curved Kakeya sets. In particular, suppose that for any $\epsilon>0$, there exists a constant $C_\epsilon>0$ such that
	\begin{equation}\label{K'}
		\|\mathcal{K}_\delta f\|_{q} \le C_\epsilon\, \delta^{1-\frac{d}{p}-\epsilon}\|f\|_{p},\quad q=(d-1)p'.
	\end{equation}
	We denote by $\mathcal{K}(p_0)$ the statement that \eqref{K'} holds for all $1\le p\le p_0$.
	
	\begin{prop}\label{Prop4}
		If the phase satisfies both the translation-invariant condition and Bourgain's condition, then $\mathcal{K}(p_0)$ holds if and only if $K(p_0)$ holds.
	\end{prop}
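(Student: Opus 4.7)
The plan is to use Theorem \ref{theomain} to straighten the Kakeya curves, thereby identifying $\mathcal{K}_\delta$ with the standard Kakeya maximal function $K_\delta$ up to a bounded change of variables.

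First, I would apply Theorem \ref{theomain} together with Remark \ref{rem4} to normalize the phase. Via the diffeomorphism $\kappa$, we may replace $\phi$ with the model
$$
\tilde\phi(x,t;y) = \langle x,y\rangle + t\,h(y),
$$
where $h$ has non-degenerate Hessian; the purely $y$-dependent and purely $t$-dependent additive terms produced by Theorem \ref{theomain} drop out of the differences $\nabla_y\tilde\phi(\mathbf{x}';y)-\nabla_y\tilde\phi(\mathbf{x};y)$ and so do not affect the tube families. Since $\kappa$ has Jacobian bounded above and below on the compact neighborhood of interest, the $L^p(dx\,dt)$ norms of $g$ and $g\circ\kappa$ are comparable, so the bound $\mathcal{K}(p_0)$ for $\phi$ is equivalent to the corresponding bound for the straight-line model $\tilde\phi$.

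Next, I would compute the tubes for the model phase. The defining inequality $|\nabla_y\tilde\phi(\mathbf{x}';y)-\nabla_y\tilde\phi(\mathbf{x};y)|<\delta$ becomes
$$
|(x'-x) + (t'-t)\,\partial_y h(y)| < \delta,
$$
so $T_y^{\delta,\tilde\phi}((\omega,0))$ is a genuine straight $\delta$-tube in $\mathbb{R}^d$ passing through $(\omega,0)$ in the direction of $(-\partial_y h(y),\,1)$. Because $h$ has non-degenerate Hessian, the direction map
$$
y \longmapsto e(y) := \frac{(-\partial_y h(y),\,1)}{|(-\partial_y h(y),\,1)|}
$$
is a local diffeomorphism from $\mathbb{B}_{\varepsilon_0}^{d-1}$ onto an open cap of $S^{d-1}$, with Jacobian bounded above and below.

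Finally, I would perform the change of variables $y \mapsto e(y)$ in the integral defining $\|\mathcal{K}_\delta g\|_{L^q(dy)}$. Since the direction $(-\partial_y h(y),1)$ is transverse to the hyperplane $\{t=0\}$, the family of straight $\delta$-tubes of direction $e(y)$ meeting the domain is parametrized (up to overlaps of bounded multiplicity) by their intercepts $(\omega,0)$, so the supremum defining $\mathcal{K}_\delta g(y)$ agrees, up to harmless constants, with $K_\delta g(e(y))$. Thus $\mathcal{K}(p_0)$ is equivalent to the Kakeya estimate $K(p_0)$ restricted to directions lying in the cap $e(\mathbb{B}_{\varepsilon_0}^{d-1})$; a standard cover of $S^{d-1}$ by finitely many rotated caps, combined with the rotation-invariance of the standard Kakeya maximal operator, upgrades this to the full estimate $K(p_0)$. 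The only nontrivial input is Theorem \ref{theomain}; the remainder is bookkeeping around the two changes of variables and the localization to a cap, and I do not expect any genuine obstacle there.
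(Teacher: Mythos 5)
Your proposal is correct and follows essentially the same route as the paper, which in fact states Proposition \ref{Prop4} without an explicit proof, relying on the preceding observation that Theorem \ref{theomain} and Remark \ref{rem4} turn the curved tubes $T_y^{\delta,\phi}$ into genuine straight $\delta$-tubes via the diffeomorphism $\kappa$. The details you supply — the cancellation of $q(y)+f(t)$ in the tube-defining differences, the direction map $y\mapsto e(y)$ being a bi-Lipschitz parametrization of a cap, and the finite cap cover with rotation invariance to recover the full estimate in the converse direction — are exactly the bookkeeping the paper leaves implicit.
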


	Next, we consider H\"{o}rmander's oscillatory operators. We fix $T_N$ to be an oscillatory integral operator as defined in Section \ref{sec intro}, with a phase function $\phi$ satisfying H\"{o}rmander's non-degeneracy condition. For such an operator, H\"{o}rmander conjectured that for all $\epsilon > 0$, there exists a constant $C_{\epsilon}$ such that
	\begin{equation}\label{Hor}
		\|T_{N} f\|_{L^p(\mathbb{R}^d)} \le C_{\epsilon}\,N^{-\frac{d}{p}+\epsilon}\, \|f\|_{L^p(\mathbb{B}_1^{d-1}(0))},
	\end{equation}
	for $p > \frac{2d}{d-1}$. H\"{o}rmander \cite{hormander} proved this conjecture for $d = 2$. For the higher-dimensional case, Stein \cite{stein1} proved \eqref{Hor} for $p \ge 2\frac{d+1}{d-1}$ and $d \ge 3$. Later, Bourgain \cite{Bourgain91} disproved H\"{o}rmander's conjecture by constructing a counterexample introducing the Kakeya compression phenomenon. Furthermore, his example showed that Stein's result is sharp in odd dimensions. For even dimensions, up to the endpoint case, Bourgain and Guth \cite{BG11} established the sharp result. In summary, we have the following theorem.
	
	\begin{theorem}[\cite{hormander,stein1,BG11}]\label{theo1}
		Let $d \ge 2$. For all $\epsilon > 0$ and $N \ge 1$, there exists a constant $C_{\epsilon}$ such that
		\[
		\|T_{N} f\|_{L^p(\mathbb{R}^d)} \le C_{\epsilon}\,N^{-\frac{d}{p}+\epsilon}\, \|f\|_{L^p(\mathbb{B}_1^{d-1}(0))},
		\]
		holds whenever
		\[
		p \ge \begin{cases}
			2\,\frac{d+1}{d-1} & \text{for $d$ odd}, \\[1ex]
			2\,\frac{d+2}{d}   & \text{for $d$ even}.
		\end{cases}
		\]
	\end{theorem}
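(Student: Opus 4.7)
The plan is to prove Theorem \ref{theo1} by breaking the statement into three regimes and combining three distinct techniques, each of which is essentially classical after the normal-form reduction already recorded in Section \ref{sec intro}. I would treat $d=2$ first, then Stein's range in $d\ge 3$, and finally the even-dimensional sharpening due to Bourgain and Guth.

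For the starting point, I would use the $TT^{*}$ method. The kernel of $T_N T_N^{*}$ is
$$K_N(\mathbf{x},\mathbf{x}')=\int e^{iN(\phi(\mathbf{x};y)-\phi(\mathbf{x}';y))}\,a(\mathbf{x};y)\overline{a(\mathbf{x}';y)}\,dy,$$
and stationary phase in $y$, applied together with the non-degeneracy hypotheses $(H_1)$ and $(H_2)$, produces the decay
$$|K_N(\mathbf{x},\mathbf{x}')|\lesssim N^{-(d-1)/2}\bigl(1+N|\mathbf{x}-\mathbf{x}'|\bigr)^{-(d-1)/2}.$$
For $d=2$ this decay rate is summable against the relevant dyadic annuli at $p=4$; interpolation with the trivial $L^{\infty}\to L^{\infty}$ bound then yields \eqref{Hor} for every $p>4$, which is Hörmander's original result.

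For general $d\ge 3$ in Stein's range $p\ge 2(d+1)/(d-1)$, I would keep the same kernel estimate and combine it with the argument underlying the Stein--Tomas restriction theorem: slice dyadically in $|\mathbf{x}-\mathbf{x}'|\sim 2^{-j}$, apply Young's inequality scale-by-scale on each annulus, and sum geometrically. The critical exponent $2(d+1)/(d-1)$ appears precisely as the balancing point between the $(d-1)/2$ decay rate of the kernel and the volume growth of the dyadic shells, and interpolation with $L^{\infty}$ gives the full range above the critical exponent.

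The genuinely hard step is Bourgain and Guth's improvement in even dimensions to $p\ge 2(d+2)/d$, for which $TT^{*}$ alone is insufficient. Here I would implement a broad/narrow decomposition at some scale $K$: decompose $f$ into wave packets over $K^{-1/2}$-caps in $y$-space and split $T_N f$ into a broad part, where contributions come from $k$ transverse caps, and a narrow part, where all caps concentrate in a lower-dimensional neighborhood. The broad part is controlled by a variable-coefficient $k$-broad inequality derived from the Bennett--Carbery--Tao multilinear oscillatory integral theorem, while the narrow part is estimated by induction on the scale $N$ after a parabolic rescaling that preserves the normal form $\phi(\mathbf{x};y)=\langle x,y\rangle+t\langle y,Ay\rangle+\cdots$ and the class of non-degenerate phases. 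The main obstacle will be precisely this multilinear-to-linear passage in the variable-coefficient setting: one has to verify that the transversality of the $K^{-1/2}$-caps survives the parabolic rescaling, that the error terms in the normal form remain negligible at each induction step, and that the parity of $d$ enters correctly through the dimension count of the narrow part. The fact that no such improvement is possible in odd dimensions, which is reflected in Bourgain's compression counterexample, is exactly what makes this parity-sensitive bookkeeping delicate.
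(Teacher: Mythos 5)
First, a structural point: the paper does not prove Theorem \ref{theo1} at all --- it is stated as background and attributed to H\"ormander, Stein, and Bourgain--Guth \cite{hormander,stein1,BG11}, so there is no internal proof to compare your proposal against. Your outline correctly identifies the three ingredients from the literature, but as a proof it has concrete gaps. The most serious one is the $d=2$ step. The $TT^{*}$ kernel bound (which, incidentally, should read $|K_N(\mathbf{x},\mathbf{x}')|\lesssim (1+N|\mathbf{x}-\mathbf{x}'|)^{-(d-1)/2}$ rather than carrying the extra prefactor $N^{-(d-1)/2}$) combined with dyadic summation yields only the Stein--Tomas exponent $p\ge 2(d+1)/(d-1)$, which for $d=2$ is $p\ge 6$, not $p\ge 4$. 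H\"ormander's two-dimensional result at $p>4$ is \emph{not} a kernel-decay estimate: it is the Carleson--Sj\"olin squaring argument, in which one writes $\|T_Nf\|_4^2=\|(T_Nf)^2\|_2$, expands $(T_Nf)^2$ as an oscillatory integral in $(y,y')$, and uses the non-degeneracy of the map $(y,y')\mapsto\nabla_{\mathbf{x}}\phi(\mathbf{x};y)+\nabla_{\mathbf{x}}\phi(\mathbf{x};y')$ away from the diagonal. Your proposed mechanism for this case would fail to reach $p=4$.

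Two further issues. For Stein's range in $d\ge 3$, ``Young's inequality scale-by-scale'' is not available because $T_NT_N^{*}$ is not a convolution operator; the actual argument interpolates an $L^1\to L^\infty$ bound from the kernel size against an $L^2\to L^2$ bound from H\"ormander's $L^2$ oscillatory integral lemma (or, in Stein's original treatment, runs complex interpolation through an analytic family), and it is condition $(H_2)$ that powers the $L^2$ estimate --- this needs to be said explicitly. For the even-dimensional improvement, your broad--narrow outline names the right ingredients (multilinear Bennett--Carbery--Tao input, parabolic rescaling induction, the parity-sensitive dimension count in the narrow part), but these constitute the entire content of \cite{BG11}; asserting that transversality survives rescaling and that the normal-form errors stay negligible is precisely the work, not a reduction of it. Also note that the even-dimensional endpoint $p=2(d+2)/d$ is delicate: the paper itself records that Bourgain--Guth establish the sharp range only ``up to the endpoint,'' so your claimed closed range $p\ge 2(d+2)/d$ would need separate justification.
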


	Furthermore, Guo, Wang, and Zhang \cite{GWZ22} conjectured that \eqref{Hor} should hold for all $p > \frac{2d}{d-1}$ provided that the phase function satisfies Bourgain's condition. By Theorem \ref{theomain}, if the phase function is translation-invariant and satisfies Bourgain's condition, then we can apply the results in \cite{BMV} or \cite{Guo-Oh} to obtain the following:
	
	\begin{theorem}
		Suppose that $\phi$ is translation-invariant and satisfies Bourgain's condition. Then for all $\epsilon > 0$, there exists a constant $C_{\epsilon}$ such that
		$$
		\|T_{N} f\|_{L^p(\mathbb{R}^3)} \le C_{\epsilon}\,N^{-\frac{3}{p}+\epsilon}\, \|f\|_{L^p(\mathbb{B}_1^{2}(0))},
		$$
		for $p > 3.25$.
	\end{theorem}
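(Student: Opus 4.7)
The plan is to use Theorem \ref{theomain} to reduce $T_N$ to an oscillatory integral operator with the normal-form translation-invariant phase $\langle x,y\rangle + t\,h(y)$, and then invoke the best known sharp restriction estimate for surfaces with non-vanishing Gaussian curvature in $\mathbb{R}^3$.

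First I would apply Theorem \ref{theomain}: there is a diffeomorphism $\kappa(x,t)$ of a neighborhood of the $\mathbf{x}$-support of $a$ such that
$$
\phi(\kappa(\mathbf{x});y) = \langle x,y\rangle + t\,h(y) + q(y) + f(t),
$$
with $h$ having non-degenerate Hessian. Performing the change of variables $\mathbf{x} \mapsto \kappa(\mathbf{x})$ in the output integral, and using that $\kappa$ is bi-Lipschitz with uniformly bounded Jacobian on the relevant compact region (the explicit form of the Jacobian following $\kappa(x,t) = T(x,\alpha(t))$ in Section \ref{sec trans} shows $\det D\kappa = \alpha'(t) \neq 0$), one obtains $\|T_N g\|_{L^p(\mathbb{R}^3)} \approx \|T_N g \circ \kappa\|_{L^p(\mathbb{R}^3)}$ with constants independent of $N$. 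The unimodular factor $e^{iNf(t)}$ depends only on the output variable, so it exits the integral and disappears upon taking absolute values. The factor $e^{iNq(y)}$ depends only on the input variable, so it is absorbed into $g$ by setting $\tilde g(y) = e^{iNq(y)}g(y)$, which preserves $\|\tilde g\|_{L^p} = \|g\|_{L^p}$. Matters are therefore reduced to the model operator
$$
\widetilde T_N \tilde g(\mathbf{x}) = \int e^{iN(\langle x,y\rangle + t\,h(y))}\,\tilde a(\mathbf{x};y)\,\tilde g(y)\,dy,
$$
where $\tilde a(\mathbf{x};y) = a(\kappa(\mathbf{x});y)$ is smooth and compactly supported.

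Second, the model phase is precisely the translation-invariant phase of the Fourier extension operator for the graph hypersurface $S = \{(y,h(y)) : y \in \mathbb{B}^2_{\varepsilon_0}\} \subset \mathbb{R}^3$. Since $h$ has non-degenerate Hessian, $S$ has non-vanishing Gaussian curvature. The standard parabolic rescaling $\xi = N\mathbf{x}$ converts the desired inequality $\|\widetilde T_N \tilde g\|_{L^p(\mathbb{R}^3)} \lesssim_\epsilon N^{-3/p+\epsilon}\|\tilde g\|_{L^p}$ into the local extension estimate
$$
\|E_S \tilde g\|_{L^p(B_N)} \lesssim_\epsilon N^\epsilon \|\tilde g\|_{L^p(\mathbb{B}^2_{\varepsilon_0})}, \quad E_S \tilde g(\xi) = \int e^{i\xi \cdot (y,h(y))}\,\tilde g(y)\,dy,
$$
for balls $B_N \subset \mathbb{R}^3$. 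For any $p > 3.25$, this is covered by the variable-coefficient extension results of \cite{BMV} (see also \cite{Guo-Oh}), which ultimately rest on Guth's polynomial-partitioning method for the restriction problem in three dimensions. Tracing the reductions backwards yields the stated bound.

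The main obstacle, hidden in the second step, is that the amplitude $\tilde a(\mathbf{x};y)$ after rescaling is $\tilde a(\xi/N;y)$, which depends on the output variable $\xi$, whereas the textbook extension operator $E_S$ has no $\xi$-dependence in its amplitude. A Taylor expansion of $\tilde a(\xi/N;y)$ in the first variable (combined with a dyadic decomposition and the rapid decay of Schwartz tails at scale $N$) reduces matters to the constant-in-$\xi$ case with negligible loss. Alternatively, the theorems in \cite{BMV} and \cite{Guo-Oh} are already stated directly for $T_N$-type operators whose translation-invariant normal form agrees with the phase of $\widetilde T_N$, so the last step can be invoked as a black box once the reduction in Theorem \ref{theomain} is in place. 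The rest of the argument is bookkeeping about the smoothness and compact support of $\tilde a$ and the uniformity of the bi-Lipschitz constants of $\kappa$.
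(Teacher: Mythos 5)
Your proposal is correct and follows essentially the same route as the paper: apply Theorem \ref{theomain} to straighten the phase to $\langle x,y\rangle + t\,h(y)$ (discarding $q(y)$ and $f(t)$ as harmless), and then invoke the known three-dimensional extension estimates for graphs with non-degenerate Hessian. The only discrepancy is attributional: the paper splits into the definite-Hessian case (citing \cite{WangWu2024}, valid for $p>\tfrac{22}{7}$) and the indefinite case (citing \cite{BMV}, \cite{Guo-Oh}, valid for $p>3.25$), whereas you cite \cite{BMV}/\cite{Guo-Oh} for both signatures --- those references treat the hyperbolic case, and for the elliptic case you should instead point to Guth's polynomial-partitioning estimate (or \cite{WangWu2024}), which your remark about polynomial partitioning implicitly acknowledges.
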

	
	The proof of the above theorem is straightforward. For any translation-invariant phase $\phi$ satisfying Bourgain's condition, Theorem \ref{theomain} ensures that there exists a local diffeomorphism transforming the phase into the form
	$$
	\tilde\phi(x,t;y) = \langle x,y\rangle + t\, h(y).
	$$
	Indeed, if the Hessian of $h$ is positive definite, then by the result in \cite{WangWu2024} the desired estimate holds for an even larger range, namely for $p > \frac{22}{7}$. On the other hand, if the Hessian of $h$ is negative definite, then by the result in \cite{BMV} (or \cite{Guo-Oh}) the estimate holds for $p > 3.25$. In general, since the Hessian of $h$ may be indefinite, the final estimate holds in the smaller range.

	Finally, let us briefly discuss a condition imposed on phase functions, known as the ``straight condition" introduced by the first author, Li, and Wang \cite{Gao}. This condition is sufficient to guarantee that the curves determined by the phase are all straight lines.

	\begin{definition}[Straight condition \cite{Gao}]\label{straight}
		We say that the phase $\phi$ satisfies the straight condition if 
		$$
		\frac{G_{0}(x,t;y)}{|G_{0}(x,t;y)|}
		$$ 
		remains invariant as $(x,t)$ varies.
	\end{definition}
	
	We compare the straight condition and Bourgain's condition through specific examples. On the level of phase functions, it is apparent that the straight condition is strictly stronger than Bourgain's condition. We denote the normalization of $G_0$ by $\tilde G_0$. Indeed, if $\phi$ satisfies the straight condition, then the normalized vector $G_0(x,t;y)/|G_0(x,t;y)|$ is independent of $(x,t)$; in other words,
	$$
	\tilde G_{0}(x,t;y)=\tilde G_{0}(\mathbf{0};y)
	$$
	for all $(x,t)$. Consequently, one obtains
	$$
	\Bigl((G_{0}\cdot \nabla)^2 \, \partial_{y}^{2}\phi(x,t;y)\Bigr)\Big|_{y=y_{0}}
	=\Bigl(\partial_{y}^{2}\bigl( G_{0}(\mathbf{0};y_0)\cdot \nabla )^{2}\phi(x,t;y)\bigr)\Bigr)\Big|_{y=y_{0}}=0,
	$$
	where we use the fact that
	$$
	G_{0}(\mathbf{0};y_{0})\cdot \partial_{y_{i}}\nabla\phi(x,t;y_{0})=0
	$$
	for any $1 \le i \le d$. Therefore, if a phase function satisfies the straight condition, it necessarily satisfies Bourgain's condition. 
	Theorem \ref{theomain} demonstrates that, in the translation-invariant setting, modulo a diffeomorphism, Bourgain's condition is equivalent to the straight condition. On the other hand, since Bourgain's condition is preserved under any diffeomorphism, phase functions that do not satisfy Bourgain's condition cannot be straightened.
	
	It is noteworthy that although the Euclidean distance function satisfies Bourgain's condition, it is not translation-invariant. Nevertheless, after an appropriate smooth change of variables, it does satisfy the straight condition (see \cite{Gao}). In other words, while the translation-invariant condition is sufficient to ensure that a phase function satisfying Bourgain's condition can be straightened, it is not necessary. 
	
	\begin{remark}
		To put Theorems \ref{theomain2} into context, note that it shows that for the Riemannian distance function on a manifold with constant sectional curvature, the associated curved Kakeya set (i.e. Nikodym sets) can be straightened via a diffeomorphism. However, this does not automatically imply that the resulting phase function after the smooth change of variables satisfies the straight condition in the sense of Definition \ref{straight}. This observation, together with our main results naturally raises the question: Which phase functions that satisfy Bourgain's condition can, modulo a smooth change of variables, yield Kakeya sets that are entirely straight? Moreover, for such phase functions, can we ensure that the transformed phases satisfy the straight condition? Theorems \ref{theomain2} and \ref{theomain} address these questions for two broad classes of phase functions and strongly suggest that most, if not all, curved Kakeya sets corresponding to a phase function satisfying Bourgain's condition can be straightened via an appropriate diffeomorphism.
	\end{remark}

	\bibliography{reference}
	\bibliographystyle{alpha}
\end{document}